\theoremstyle{definition}
\newtheorem{theorem}{Theorem}[section]
\newtheorem{definition}[theorem]{Definition}
\newtheorem{proposition}[theorem]{Proposition}
\newtheorem{lemma}[theorem]{Lemma}
\newtheorem{corollary}[theorem]{Corollary}
\newtheorem*{remark}{Remark}
\newtheorem*{acknowledgment}{Acknowledgments}
\newcommand{\Natural}{\mathbb{N}}
\newcommand{\Nonnegative}{\mathbb{N}_0}
\newcommand{\Nonnegativeinfty}{\mathbb{N}_0 \cup \left\{ \infty \right\}}
\newcommand{\Integer}{\mathbb{Z}}
\newcommand{\Real}{\mathbb{R}}
\newcommand{\Complex}{\mathbb{C}}
\newcommand{\abs}[1]{\left\lvert #1 \right\rvert}
\newcommand{\norm}[1]{\left\lVert #1 \right\rVert}
\newcommand{\set}[1]{\left\{ #1 \right\}}
\newcommand{\tset}[1]{\{ #1 \}}
\newcommand{\setcond}[2]{\left\{ #1 \,;\, #2 \right\}}
\newcommand{\dsum}[2]{\displaystyle{\sum_{#1}^{#2}}}
\newcommand{\T}{\mathcal{T}}
\newcommand{\Cayley}[2]{\mathrm{Cay} \left( #1 , #2 \right)}
\newcommand{\rad}[1]{\mathrm{rad} \left( #1 \right)}
\newcommand{\diam}[1]{\mathrm{diam} \left( #1 \right)}
\newcommand{\dist}[2]{d \left( #1 , #2 \right)}
\newcommand{\supp}[1]{\mathrm{supp} \left( #1 \right)}
\newcommand{\Matsp}[2]{\mathrm{Mat}_{#1} \left( #2 \right)}
\newcommand{\length}[1]{\ell \left( #1 \right)}
\renewcommand{\L}{\mathcal{L}}
\newcommand{\R}{\mathcal{R}}
\numberwithin{equation}{section}
\title{\textbf{Hypergroups derived from random walks on some infinite graphs}}
\author{Tomohiro IKKAI\footnote{e-mail: m13006z@math.nagoya-u.ac.jp}, Yusuke SAWADA\footnote{e-mail: m14017c@math.nagoya-u.ac.jp}}
\date{}
\begin{document}

\maketitle
\begin{center}
Graduate School of Mathematics, 

Nagoya University, 

Nagoya, 464-8602, JAPAN
\end{center}

\renewcommand{\thefootnote}{}

\footnote{2010 \emph{Mathematics Subject Classification}: Primary 43A62; Secondary 05C81.}

\footnote{\emph{Key words and phrases}: Hermitian discrete hypergroups, distance-regular graphs, association schemes, Cayley graphs, 
infinite graphs.}

\renewcommand{\thefootnote}{\arabic{footnote}}
\setcounter{footnote}{0}


\begin{abstract}
Wildberger gave a method to construct a finite hermitian discrete hypergroup from a random walk on a certain kind of finite graphs. 
In this article, we reveal that his method is applicable to a random walk on a certain kind of infinite graphs. 
Moreover, we make some observations of finite or infinite graphs on which a random walk produces a hermitian discrete hypergroup. 
\end{abstract}


\section{Introduction} \label{SecIntro}

Let $\Natural$ be the set of positive integers, 
$\Nonnegative = \Natural \cup \tset{0}$, 
$\Integer$ be the ring of rational integers, 
$\Real$ be the field of real numbers, 
$\Real_{+}$ be the set of non-negative real numbers and $\Complex$ be the field of complex numbers, respectively.

The concept of hypergroups is a probability theoretic extension of the concept of locally compact groups. 
It was introduced by Dunkl \cite{Dun73}, Jewett \cite{Jew75} and Spector \cite{Spe78}, 
and many authors developed harmonic analysis and representation theory on hypergroups as well as those on locally compact groups. 

Discrete hypergroups, including finite hypergroups, are the main objects in this article. 
We state the definition of discrete hypergroups in Section \ref{SecPreHG}, 
but omit the general definition of the non-discrete cases in this article. 
For the general definition of hypergroups and a fundamental theory of hypergroups, see \cite{Blo-Hey} for example. 
Actually, the concept of (finite) discrete hypergroups was implicitly utilized by Frobenius around 1900, before Dunkl, Jewett and Spector. 
Recently, it was shown by Wildberger \cite{Wil95} that finite commutative hypergroups have some connections 
with number theory, conformal field theory, subfactor theory and other fields. 

Structures of finite hypergroups of order two and three are completely determined. 
Indeed, those of order two are classically known, and those of order three were investigated by Wildberger \cite{Wil02}. 
However, few examples of finite hypergroups of order four or greater were known; 
A recent result by Matsuzawa, Ohno, Suzuki, Tsurii and Yamanaka \cite{MOSTY} gives 
some examples of non-commutative hypergroups of order five. 
Wildberger \cite{Wil94}, \cite{Wil95} gave a way to construct a finite commutative hypergroup 
from a random walk on a certain kind of finite graphs. 
By his method, we have some examples of hypergroups of large order. 
In particular, the authors disclose that random walks on prism graphs produce finite commutative hypergroups of arbitrarily large order. 
After recalling Wildberger's method in Section \ref{SecDistReg}, 
we will give the structures of those hypergroups derived from prism graphs in Section \ref{SecHGPGraph}. 

It is known that not all finite graphs produce hypergroups. 
Wildberger mentioned 
that a random walk on any strong regular graph and on any distance transitive graph produces a hypergroup of order three 
and suggested that a random walk on any distance-regular graphs produces a finite hypergroup. 
(We do not discuss on distance transitive graphs in this article, but it is known that distance transitive graphs are always distance-regular.) 
The authors verify that a random walk on a distance-regular graph certainly produces a finite hypergroup. 
This claim will be restated as Theorem \ref{ThmDRGHG} in Section \ref{SecDRGWil}. 

Distance-regular graphs can be defined by using terms of association schemes. 
Wildberger \cite{Wil95} gave a way to construct a finite hypergroup from the Bose-Mesner algebra of an association scheme 
without using any graph theoretical terms, so that we have two ways to construct hypergroups from a distance-regular graph. 
The authors also verify that these two ways produce the same hypergroup, which is proved in Section \ref{SecDRGWil}. 

The authors discovered that Wildberger's method also works for some infinite graphs, 
and a random walk on such a graph produces a countably infinite but discrete hypergroup. 
This article is the first attempt to construct infinite hypergroups from random walks on graphs. 
In Section \ref{SecDRGEx} and Section \ref{SecHGPGraph}, 
we give several examples of infinite regular graphs on which a random walk produces a discrete hypergroup. 
The following theorem is one of the main results 
and gives examples of infinite regular graphs on which a random walk produces a discrete hypergroup. 

\begin{theorem} \label{ThmInfty}
$\mathrm{(a)}$ \ For any $k \in \Natural$ with $k \geq 2$, a random walk on the infinite $k$-regular tree produces a discrete hypergroup. 

$\mathrm{(b)}$ \ A random walk on the Cayley graph 
$\Cayley{\Integer \oplus \left( \Integer / 2 \Integer \right)}{\set{(\pm 1, \overline{0}), (0, \overline{1})}}$ produces a discrete hypergroup. 
(The symbols $\overline{0}$ and $\overline{1}$ denote the residue classes of $0$ and $1$ modulo $2$, respectively.)
\end{theorem}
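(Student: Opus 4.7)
Part (a): The plan is to verify that the infinite $k$-regular tree $T_k$ satisfies a natural infinite analogue of distance-regularity, then invoke Wildberger's construction exactly as in Theorem \ref{ThmDRGHG}. Fix a base vertex $o$, and note that the spheres $S_n = \{v \in V(T_k) : d(o, v) = n\}$ are finite, with $|S_0| = 1$ and $|S_n| = k(k-1)^{n-1}$ for $n \geq 1$. Because $T_k$ has no cycles, any two vertices $u, w$ at distance $n$ are joined by a unique geodesic; a short case analysis (on whether $v$ lies on this geodesic or branches off from a specific intermediate vertex) shows that the quantity
\[
p_{ij}^n := |\{v : d(u, v) = i,\ d(v, w) = j\}|
\]
is finite, depends only on $(i, j, n)$, and is nonzero only when $|i - j| \leq n \leq i + j$ with $i + j - n$ even. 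The normalized sphere functions $c_n := |S_n|^{-1}\mathbf{1}_{S_n}$ then satisfy $c_i * c_j = \sum_n \alpha_{ij}^n c_n$, a finite positive combination with coefficients summing to $1$, and the hypergroup axioms transfer verbatim from the finite distance-regular case.

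Part (b): The Cayley graph $\Gamma$ is \emph{not} distance-regular: taking $o = (0, \overline{0})$, the vertices $w_1 = (2, \overline{0})$ and $w_2 = (1, \overline{1})$ both lie in $S_2(o)$, yet $|S_1(o) \cap S_1(w_1)| = 1$ while $|S_1(o) \cap S_1(w_2)| = 2$, so the sphere-based construction breaks down. The plan is instead to use the reflection $r : (n, a) \mapsto (-n, a)$, which is a graph automorphism fixing $o$. Its orbits on $V(\Gamma)$ are $O_{k, a} := \{(k, a), (-k, a)\}$ for $k \geq 1$, together with the singletons $\{(0, a)\}$. Set $c_{k, a}$ equal to the normalized indicator of $O_{k, a}$. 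A direct computation in the group algebra $\ell^1(\Integer \oplus (\Integer / 2 \Integer))$ then yields
\[
c_{i, b} * c_{j, c} = \tfrac{1}{2}\, c_{|i - j|,\, b + c} + \tfrac{1}{2}\, c_{i + j,\, b + c},
\]
where the two terms coalesce if an index is zero. Hence $\{c_{k, a}\}_{(k, a) \in \Nonnegative \times (\Integer/2\Integer)}$ is closed under convolution with nonnegative coefficients summing to $1$; the identity is $c_{0, \overline{0}}$, and $c_{k, a}^* = c_{k, a}$ because each $O_{k, a}$ is stable under $g \mapsto -g$. Since the random walk measure equals $\tfrac{2}{3}\, c_{1, \overline{0}} + \tfrac{1}{3}\, c_{0, \overline{1}}$, it belongs to this hypergroup algebra, establishing the claim.

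The main obstacle lies in part (b): distance alone does not separate the orbits of the natural symmetry group, so one must enlarge the index set from $\Nonnegative$ (distances) to $\Nonnegative \times (\Integer/2\Integer)$ (reflection orbits). Once the correct indexing is identified, the convolution formula and the verification of the hypergroup axioms reduce to short group-algebra computations, and the finite support of each $c_{i, b} * c_{j, c}$ is immediate from the closed form.
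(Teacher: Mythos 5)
Your part (a) takes the same route as the paper: establish that the infinite $k$-regular tree is distance-regular (using uniqueness of geodesics in a tree) and then apply Theorem \ref{ThmDRGHG}; this is correct.

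Part (b) has a genuine gap: you prove a true statement, but not the one the theorem asserts. In this paper, ``a random walk on $X$ produces a discrete hypergroup'' means precisely (Section \ref{SecDRGWil}, Definition \ref{DefHGP}) that the canonical \emph{distance} partition $\R(X) = \set{R_i}_{i \in I}$ of $V \times V$, equipped with the convolution $R_i \circ_{v_0} R_j = \sum_{k} P_{i,j}^{k} R_k$ given by the averaged probabilities \eqref{eqDefP}, is a hermitian discrete hypergroup. Your observation that the ladder graph is not distance-regular is correct, but the conclusion that ``the sphere-based construction breaks down'' is not: the coefficients $P_{i,j}^{k}$ average over $v \in \Gamma_{i}(v_0)$ and are well defined for every infinite graph of finite degrees (Proposition \ref{PropWellDefined}); what is lost is only the automatic associativity that distance-regularity supplies. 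Passing to reflection orbits yields the orbit hypergroup of $\Integer \oplus (\Integer/2\Integer)$ under $(n,a) \mapsto (-n,a)$ --- a correct but different algebra, indexed by $\Nonnegative \times (\Integer/2\Integer)$ rather than by distances --- and the fact that the step distribution lies in that algebra says nothing about $\R(\L)$.

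Nor can the theorem be deduced from your orbit hypergroup by pushing forward, because sphere-averaging is not an algebra homomorphism. Writing $\mu_i$ for the uniform probability measure on $\Gamma_{i}(v_0)$, one computes in your notation
\begin{equation*}
\mu_1 * \mu_1 = \frac{1}{3}\, c_{0,\overline{0}} + \frac{2}{9}\, c_{2,\overline{0}} + \frac{4}{9}\, c_{1,\overline{1}},
\end{equation*}
whose restriction to $\Gamma_{2}(v_0) = O_{2,\overline{0}} \cup O_{1,\overline{1}}$ is not proportional to $\mu_2 = \frac{1}{2} c_{2,\overline{0}} + \frac{1}{2} c_{1,\overline{1}}$. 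Hence replacing $\mu_i * \mu_j$ by its sphere-averaged version before convolving again genuinely changes the measure, and the associativity of $\circ_{v_0}$ on $\R(\L)$ is an additional fact that must be verified. The paper does this by computing the structure identities of $\R(\L)$ explicitly (Section \ref{SecHGPGraph}, item $\mathrm{(iv)}$), reading off commutativity from them, and reducing associativity to the single family of identities $(R_1 \circ R_i) \circ R_j = R_1 \circ (R_i \circ R_j)$ via Proposition \ref{PropAssCom}. To complete your argument you would need to carry out this verification; your orbit hypergroup is a convenient tool for computing the $P_{i,j}^{k}$, but it is not a substitute for the associativity check.
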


The definitions of Cayley graphs and the infinite $k$-regular tree will be given in Section \ref{SecDRGEx}. 

It is a graph theoretical problem to decide whether a random walk on a given graph produces a discrete hypergroup. 
The purpose of this article is to formulate Wildberger's construction as a graph theoretical problem in the sense of the above and 
to establish a fundamental theory on that problem. 

Contents of this article are as follows: 
Section \ref{SecPrelim} of this article is devoted 
to definitions, notations and basic facts on discrete hypergroups, graphs and association schemes. 

In Section \ref{SecDistReg}, we recall Wildberger's construction of a discrete hypergroup from a random walk on a graph 
in an extended form including the case when the graph is infinite. 
Section \ref{SecDRGWil} gathers some fundamental propositions to formulate our problems 
and provides a proof that a random walk on a distance-regular graph always produces a discrete hypergroup. 
Section \ref{SecDRGEx} furnishes two kinds of examples of finite distance-regular graphs (complete graphs and platonic solids) 
and two kinds of examples of infinite ones (infinite regular trees and the ``linked-triangle graph''). 
The distance-regularity of finite graphs is a classical concept, 
and it is already known that random walks on complete graphs and platonic solids produce discrete hypergroups.
However, the distance-regularity of infinite graphs has not been discussed even in graph theory. 
The authors proved that the two kinds of infinite graphs are certainly distance-regular 
and the claim $\mathrm{(a)}$ of Theorem \ref{ThmInfty}, and proofs of them will be given in that section. 
In particular, the ``linked-triangle graph,'' constructed in that section, 
was an unexpected example on which a random walk produce a discrete hypergroup. 

We will consider non-distance-regular graphs in Section \ref{SecNDR}. 
In Section \ref{SecNDRGen}, further observations than those in Section \ref{SecDRGWil} will be made. 
One can meet some examples of non-distance-regular graphs 
(prisms, complete bipartite graphs, the infinite ladder graph and some intriguing finite graphs) in Section \ref{SecHGPGraph}. 
These are new examples on which random walks produce discrete hypergroups. 
We will mention the essence of a proof of Theorem \ref{ThmInfty} $\mathrm{(b)}$ in the same section. 


\section{Preliminaries} \label{SecPrelim}

As preliminaries to arguments in the following sections, 
we will give some definitions, notations and fundamental propositions from hypergroup theory and graph theory in this section. 

First, we provide some notations used in the following arguments. 

\begin{itemize}
\item For a set $X$, we let $\abs{X}$ denote the number of elements in $X$. 
For an infinite set $X$, we just say $\abs{X} = \infty$ and do not consider its cardinality. 

\item We are to use the convention that $\infty > n$ for any $n \in \Nonnegative$. 



\item Let $X$ be an arbitrary set and $\mathcal{A}$ be a commutative ring. 
A map $\varphi : X \times X \rightarrow \mathcal{A}$ is identified 
with a $\mathcal{A}$-valued $X \times X$ matrix $A_{\varphi} = \left( \varphi(x, y) \right)_{x, y \in X}$. 
The set of $\mathcal{A}$-valued $X \times X$ matrices is denoted by $\Matsp{X}{\mathcal{A}}$. 

The sum and the product of two matrices $A = (A_{x,y})_{x,y \in X}$, $B = (B_{x,y})_{x,y \in X} \in \Matsp{X}{\mathcal{A}}$ is defined 
in a usual way like 
\begin{equation*}
A + B = (A_{x,y} + B_{x,y})_{x,y \in X}. 
\end{equation*}
On the other hand, note that the product of two matrices $A = (A_{x,y})_{x,y \in X}$, $B = (B_{x,y})_{x,y \in X} \in \Matsp{X}{\mathcal{A}}$ 
can be defined in a usual way like 
\begin{equation*}
AB = \left( \sum_{z \in X} A_{x,z} B_{z,y} \right)_{x,y \in X}, 
\end{equation*}
only if either $A$ has finitely many non-zero entries in each rows and each columns or $B$ does. 
In particular, when $A$ has finitely many non-zero entries in each rows and each columns, we can define the $n$-th power $A^{n}$ of $A$ with $n \in \Nonnegative$. 
(As usual, $A^{0}$ is defined as the identity matrix $E = (\delta_{x,y})_{x,y \in X}$, where $\delta_{x,y}$ is the Kronecker delta.)
\end{itemize}


\subsection{Preliminaries from Hypergroup Theory} \label{SecPreHG}

We treat only ``discrete hypergroups'' in this article. 
In general, hypergroups are equipped with some topologies and require several complicated conditions. 
Such conditions for hypergroups with the discrete topologies can be simplified. 
For a general definition, see \cite{Blo-Hey} for example. 

Let $K$ be an arbitrary set and consider the free vector space 
\begin{equation*}
\Complex K = \bigoplus_{x \in K} \Complex x 
= \setcond{\dsum{i=1}{n} a_i x_i}{n \in \Natural, a_1, \cdots, a_n \in \Complex, x_1, \cdots, x_n \in K}, 
\end{equation*}
generated by $K$. 
We note that the $\ell^1$-norm $\norm{\cdot}_1 : \Complex K \rightarrow \Real_{+}$ such that 
\begin{equation*}
\norm{\mu}_1 = \sum_{x \in K} \abs{\mu (x)} \quad \left( \mu \in \Complex K \right), 
\end{equation*}
where $\mu (x)$ denotes the coefficient of $x$ as $\mu$ is expressed by a linear combination of elements of $K$, 
induces a normed space structure into $\Complex K$. 
For $\mu \in \Complex K$, we define the \textit{support} of $\mu$, denoted by $\supp{\mu}$, as 
\begin{equation*}
\supp{\mu} = \setcond{x \in K}{\mu(x) \neq 0}. 
\end{equation*}



Here is the definition of discrete hypergroups. 

\begin{definition} \label{DefHG}
We call a set $K$ a \textit{discrete hypergroup} if the following four conditions are fulfilled. 

\begin{itemize}
\item[$\mathrm{(i)}$] A binary operation 
\begin{equation*}
\circ : \Complex K \times \Complex K \ni (\mu, \nu) \mapsto \mu \circ \nu \in \Complex K, 
\end{equation*}
called a \textit{convolution}, and an involution map 
\begin{equation*}
* : \Complex K \ni \mu \mapsto \mu^{*} \in \Complex K
\end{equation*}
are defined, 
and $\left( \Complex K, \circ, * \right)$ form an associative $*$-algebra (i.e.\ $\Complex$-algebra equipped with an involution map) 
with the neutral element $e \in K$. 
(For the definition of an involution map, see \cite[Chapter 11]{Rud} for example.) 

\item[$\mathrm{(ii)}$] The convolution $x \circ y$ of two elements $x$, $y \in K$ satisfies the following conditions 
$\mathrm{(iia)}$ and $\mathrm{(iib)}$: 

\begin{itemize}
\item[$\mathrm{(iia)}$] For all $x$, $y$, $z \in K$, $(x \circ y) (z) \in \Real_{+}$. 

\item[$\mathrm{(iib)}$] For all $x$, $y \in K$, $\norm{x \circ y}_1 = 1$. 

\end{itemize}

\item[$\mathrm{(iii)}$] The involution $*$ maps $K$ onto $K$ itself. 

\item[$\mathrm{(iv)}$] For $x$, $y \in K$, the neutral element $e$ belongs to $\supp{x \circ y}$ if and only if $y = x^{*}$. 
\end{itemize}

A discrete hypergroup $K$ is said to be \textit{finite} if $K$ is a finite set, 
to be \textit{commutative} if $(\Complex K, \circ)$ is a commutative algebra 
and to be \textit{hermitian} if the restriction of the involution $* \mid_K$ is the identity map on $K$. 
\end{definition}

It can be easily verified that a hermitian discrete hypergroup must be commutative. 


The structure of a discrete hypergroup $K$ can be determined by results of the operations $\circ$ and $*$ for elements of $K$. 
In particular, under the assumption that $K$ is a hermitian discrete hypergroup, 
we can compute a convolution of arbitrary two elements in $\Complex K$ 
if we know the results of computations of $x \circ y$ for all $x$, $y \in K$ since the convolution $\circ$ is bilinear. 
In other words, if the convolution of two elements $x$, $y \in K$ is expressed as 
\begin{equation} \label{eqStructure}
x \circ y = \sum_{z \in K} P_{x, y}^{z} z 
\end{equation}
with some $P_{x,y}^{z} \in \Real_{+}$, 
we can express the convolution of any two elements of $\Complex K$ as a linear combination of elements of $K$ 
by using the coefficients $P_{x, y}^{z}$. 
In this article, the identities such as \eqref{eqStructure}, defining convolutions of two elements of $K$, 
are called the ``structure identities'' of $K$. 

\begin{remark}
A discrete group $G$ can be regarded as a discrete hypergroup. 
More precisely, defining a convolution on $\Complex G$ as the bilinear extension of the multiplication $G \times G \ni (x, y) \mapsto x y \in G$ 
and the involution on $\Complex G$ as the conjugate-linear extension of the inversion $G \ni x \mapsto x^{-1} \in G$, 
one can check that $G$ satisfies the conditions $\mathrm{(i)}$--$\mathrm{(iv)}$ in Definition \ref{DefHG}. 
These operations are the same as those of the group algebra $\Complex [G]$. 
This is the reason why the concept of discrete hypergroups is a generalization of the concept of discrete groups. 
\end{remark}


\subsection{Preliminaries from Graph Theory} \label{SecPreGraph}

In our context, all graphs are supposed to have neither any loops nor any multiple edges (i.e.\ to be simple graphs). 
We consider only \textit{connected} graphs, in which, for any two vertices $v$, $w$, there exists a path from $v$ to $w$, 
but we will handle both finite graphs and infinite graphs. 

Let $X = (V, E)$ be a graph with the vertex set $V$ and the edge set $E$. 
For two vertices $v$, $w \in V$, 
the \textit{distance} $\dist{v}{w}$ between $v$ and $w$ is defined as the length of the shortest paths from $v$ to $w$. 
Note that the distance function $d : V \times V \rightarrow \Nonnegative$ allows $V$ to be a metric space, 
that is, the function $\dist{\cdot}{\cdot}$ satisfies the following three properties: 
\begin{gather}
\dist{v}{w} = 0 \Longleftrightarrow v = w \quad (v, w \in V), \label{eqIndis} \\
\dist{v}{w} = \dist{w}{v} \quad (v, w \in V), \label{eqSym} \\ 
\dist{v}{w} \leq \dist{v}{u} + \dist{u}{w} \quad (u, v, w \in V). \label{eqTriIneq} 
\end{gather}
It is clear that two vertices $v$ and $w$ are adjacent if and only if $\dist{v}{w} = 1$. 
A path in $X$ is called a \textit{geodesic} if its length equals to the distance between the initial vertex and the terminal vertex. 
The \textit{eccentricity} $e(v) \in \Nonnegativeinfty$ of $v \in V$ is defined as 
\begin{equation} \label{eqEcc}
e(v) = \sup \setcond{\dist{v}{w}}{w \in V}. 
\end{equation}
If the set $\setcond{\dist{v}{w}}{w \in V}$ is unbounded above, then we define $e(v) = \infty$. 
This happens only if $X$ is infinite. 
The \textit{radius} $\rad{X} \in \Nonnegativeinfty$ and the \textit{diameter} $\diam{X} \in \Nonnegativeinfty$ of $X$ are defined as 
\begin{gather*}
\rad{X} = \min \setcond{e(v)}{v \in V}, \\
\diam{X} = \max \setcond{e(v)}{v \in V}. 
\end{gather*}

Clearly, that $\rad{X} = \infty$ implies that $\diam{X} = \infty$. 
A \textit{complete graph}, in which distinct vertices are mutually adjacent, is of diameter one, 
and a non-complete graph is of diameter two or greater. 
We say $X$ to be \textit{self-centered} if $X$ is finite and every $v \in V$ satisfies that $e(v) = \rad{X}$. 
For each $i \in \Nonnegative$ and $v \in V$, let $\Gamma_{i}(v)$ denote the set 
\begin{equation*}
\Gamma_{i}(v) = \setcond{w \in V}{\dist{v}{w} = i}. 
\end{equation*}
It immediately follows that $\Gamma_{i} (v) = \varnothing$ if $i > \diam{X}$.  
The number of elements in $\Gamma_{1} (v)$ is called the \textit{degree} of $v$. 
The graph $X$ is said to be \textit{$k$-regular} for some $k \in \Natural$ if every vertex $v \in V$ is of degree $k$. 
We are going to deal with graphs of which every vertex has a finite degree in this article. 


An \textit{automorphism} of a graph $X = (V, E)$ is a bijection $\varphi : V \rightarrow V$ such that, for two vertices $v$, $w \in V$, 
$\varphi(v)$ and $\varphi(w)$ are adjacent in $X$ if and only if $v$ and $w$ are adjacent in $X$.  
A graph $X$ is said to be \textit{vertex-transitive} if, for any two vertices $v$, $w \in V$, 
there exists an automorphism $\varphi$ of $X$ such that $\varphi(v) = w$. 


The ``distance-regular graphs'' play a key role in the following part. 
Here we recall the definition of such graphs, and some examples will appear in Section \ref{SecDRGEx}. 

\begin{definition} \label{DefDRG}
A (finite or infinite) connected graph $X = (V, E)$ is said to be \textit{distance-regular} if the following three conditions are fulfilled. 
\begin{itemize}
\item[$\mathrm{(i)}$] There exists $b_0 \in \Natural$ such that $\abs{\Gamma_{1} (v)} = b_0$ for any $v \in V$. 

\item[$\mathrm{(ii)}$] For each integer $i$ with $1 \leq i < \diam{X}$, there exist $b_i \in \Natural$ and $c_i \in \Natural$ such that 
$\abs{\Gamma_{i+1}(v) \cap \Gamma_{1}(w)} = b_i$ and $\abs{\Gamma_{i-1}(v) \cap \Gamma_{1}(w)} = c_i$ 
for any $v$, $w \in V$ with $\dist{v}{w} = i$. 

\item[$\mathrm{(iii)}$] If $s = \diam{X} < \infty$, there exists $c_s \in \Natural$ 
such that $\abs{\Gamma_{s-1}(v) \cap \Gamma_{1}(w)} = c_s$ for any $v$, $w \in V$ with $\dist{v}{w} = s$. 
\end{itemize}

For a distance-regular graph $X$, 
the sequence $(b_0, b_1, b_2, \cdots; c_1, c_2, c_3, \cdots)$ of integers is called the \textit{intersection array} of $X$. 
\end{definition}

Note that a distance-regular graph is $b_0$-regular. 

Distance-regular graphs are important in combinatorics: 
Its vertex set has a structure of an ``association scheme.'' 

\begin{definition} \label{DefAssSch}
Let $Y$ be a set and $\R = \set{R_{i}}_{i \in I}$ be a partition of $Y \times Y$ 
(i.e.\ $R_{i}$'s are mutually disjoint subsets of $Y \times Y$ and $Y \times Y = \bigcup_{i \in I} R_{i}$), 
which consists of countably many non-empty sets. 
The pair $(Y, \R)$ is called a (\textit{symmetric}) \textit{association scheme} if the following conditions are all satisfied. 

\begin{itemize}

\item[$\mathrm{(i)}$] The diagonal set $\setcond{(y, y)}{y \in Y}$ of $Y \times Y$ is an entry of $\R$. 

\item[$\mathrm{(ii)}$] If $(x, y) \in R_i$, then $(y, x) \in R_i$. 

\item[$\mathrm{(iii)}$] For any $i$, $j$, $k \in I$, 
there exists $p_{i,j}^{k} \in \Nonnegative$ such that 
\begin{equation*}
\abs{\setcond{z \in Y}{(x, z) \in R_i , (z, y) \in R_j}} = p_{i,j}^{k}
\end{equation*}
for each pair $(x, y) \in R_k$. 
\end{itemize}

The numbers $p_{i,j}^{k}$ in the condition $\mathrm{(iii)}$ are called the \textit{intersection numbers} of $(Y, \R)$. 
\end{definition}

Given a connected graph $X = (V, E)$, we can find a canonical partition of $V \times V$; 
Let 
\begin{gather}
I = 
 \begin{cases}
  \set{0, 1, \cdots, \diam{X}} & (\diam{X} \in \Natural), \\
  \Nonnegative & (\diam{X} = \infty), \\
 \end{cases} \label{eqPartition1} \\
R_i = \setcond{(v, w) \in V \times V}{\dist{v}{w} = i} \quad (i \in I) \label{eqPartition2}
\end{gather}
and $\R (X) = \set{R_i}_{i \in I}$. 
If every vertex of $X$ has a finite degree, then $R_i$'s are all non-empty sets and $\R (X)$ forms a partition of $V \times V$ 
(see Proposition \ref{PropWellDefined}). 
Furthermore, we find that, by \eqref{eqIndis}, 
\begin{equation*}
R_0 = \setcond{(v, v)}{v \in V}
\end{equation*}
and that, by \eqref{eqSym}, 
\begin{equation*}
(v, w) \in R_i \Longrightarrow (w, v) \in R_i 
\end{equation*}
for any $i \in I$. 

We prove that distance-regular graphs can be characterized by the condition $\mathrm{(iii)}$ in Definition \ref{DefAssSch}. 
This is a classical result in the finite graph case, 
so that a proof of the following proposition for the case when $X$ is a finite graph is introduced in \cite{Bai}. 
Even if $X$ is infinite, the same method as that on \cite{Bai} works. 

\begin{proposition} \label{PropDRGAS}
Let $X = (V, E)$ be a connected graph and $s = \diam{X} \in \Nonnegativeinfty$. 
Then the followings are equivalent. 

\begin{itemize}
\item[$\mathrm{(a)}$] The graph $X$ is distance-regular. 

\item[$\mathrm{(b)}$] The pair $\left(V, \R (X) \right)$ forms an association scheme. 
\end{itemize}
\end{proposition}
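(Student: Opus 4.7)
The plan is to prove the two directions separately. The implication (b) $\Rightarrow$ (a) is essentially a matter of reading off the distance-regularity constants from the intersection numbers, while (a) $\Rightarrow$ (b) requires extending the distance-regularity data (which controls only intersection numbers of the form $p_{i,1}^{k}$) to all triples $(i,j,k)$ via an induction on $j$.

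For (b) $\Rightarrow$ (a), I would set $b_{i} := p_{i+1,1}^{i}$ and $c_{i} := p_{i-1,1}^{i}$; these are well-defined by axiom (iii) of Definition \ref{DefAssSch} and immediately yield the numerical conditions in Definition \ref{DefDRG}. The only substantive point is positivity: for $(v,w) \in R_{i}$ with $i \geq 1$, the penultimate vertex on any geodesic from $v$ to $w$ lies in $\Gamma_{i-1}(v) \cap \Gamma_{1}(w)$ and witnesses $c_{i} \geq 1$; for $b_{i}$ with $i < \diam{X}$, non-emptiness of $R_{i+1}$ together with the same geodesic observation applied to any pair in $R_{i+1}$ shows $b_{i} \geq 1$.

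For (a) $\Rightarrow$ (b), axioms (i) and (ii) of Definition \ref{DefAssSch} are immediate from \eqref{eqIndis} and \eqref{eqSym} together with the non-emptiness of each $R_{i}$. Axiom (iii) is the heart of the matter, and I would prove it by induction on $j$. The case $j = 0$ gives $p_{i,0}^{k} = \delta_{i,k}$, and the case $j = 1$ is exactly the data of distance-regularity (yielding $b_{k}$, $a_{k} := b_{0} - b_{k} - c_{k}$, $c_{k}$, or $0$ according to whether $i = k+1, k, k-1$, or otherwise). For the step from $j$ to $j+1$, I would fix $(x,y) \in R_{k}$ and double-count ordered triples $(z,w)$ with $\dist{x}{z} = i$, $\dist{z}{w} = j$, $\dist{w}{y} = 1$: summing first over $w \in \Gamma_{1}(y)$ and invoking the inductive hypothesis produces $\sum_{m} \abs{\Gamma_{m}(x) \cap \Gamma_{1}(y)} \, p_{i,j}^{m}$, which by the $j = 1$ case depends only on $(i,j,k)$; summing first over $z$ and splitting by $\dist{z}{y} \in \set{j-1, j, j+1}$ produces $b_{j-1} p_{i,j-1}^{k} + a_{j} p_{i,j}^{k} + c_{j+1} p_{i,j+1}^{k}$. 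Equating the two expressions and using $c_{j+1} \geq 1$ isolates $p_{i,j+1}^{k}$ as a quantity depending only on $(i, j+1, k)$.

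The main obstacle is the careful handling of the boundary of the index set $I$. In the finite case with $s = \diam{X}$, the recursive step is only required for $j+1 \leq s$, and the coefficient $c_{j+1}$ is supplied by condition (ii) of Definition \ref{DefDRG} when $j+1 < s$ and by condition (iii) precisely when $j+1 = s$, so the induction closes. In the infinite case the induction runs freely over all $j \in \Nonnegative$, and the only extra input is that each $R_{i}$ is non-empty, which follows from connectedness together with $\diam{X} = \infty$ (guaranteed by Proposition \ref{PropWellDefined}).
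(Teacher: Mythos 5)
Your proof is correct. The $\mathrm{(b)} \Rightarrow \mathrm{(a)}$ direction coincides with the paper's (reading $b_i = p_{i+1,1}^{i}$ and $c_i = p_{i-1,1}^{i}$ off the intersection numbers), though you are more careful than the paper in verifying that these constants are strictly positive, as Definition \ref{DefDRG} requires $b_i, c_i \in \Natural$. The substantive difference is in $\mathrm{(a)} \Rightarrow \mathrm{(b)}$: the paper introduces the distance matrices $A^{(i)}$ of \eqref{eqDistmat}, derives the three-term recurrence \eqref{eqLinearize}, deduces that each $A^{(i)}$ is a polynomial in $A^{(1)}$, and concludes that every product $A^{(i)}A^{(j)}$ is a non-negative integer combination of the $A^{(k)}$, whose coefficients are then identified entrywise as the intersection numbers. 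Your double-counting induction on $j$ is the entrywise shadow of exactly this computation --- counting the pairs $(z,w)$ with $\dist{x}{z}=i$, $\dist{z}{w}=j$, $\dist{w}{y}=1$ in two orders is the $(x,y)$-entry of $\left( A^{(i)}A^{(j)} \right) A^{(1)} = A^{(i)}\left( A^{(j)}A^{(1)} \right)$ combined with \eqref{eqLinearize} --- so the two arguments have the same mathematical content. What your version buys: it never forms products of infinite matrices (so the row-finiteness and associativity caveats from Section \ref{SecPrelim} need not be invoked), the non-negativity and integrality of $p_{i,j}^{k}$ are automatic because they arise as cardinalities, and the boundary behaviour (the role of $c_s$ supplied by condition $\mathrm{(iii)}$ of Definition \ref{DefDRG} when $j+1=s$, and the vacuity of the step when $j+1>s$) is made explicit rather than hidden in the convention $b_{-1}=b_s=c_0=c_{s+1}=0$. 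What the paper's version buys is that the algebraic packaging is reused immediately afterwards for the Bose--Mesner construction at the end of Section \ref{SecDRGWil}. Either route is complete.
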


\begin{proof}
First, suppose that $X$ satisfies the condition $\mathrm{(b)}$. 
Then, for two vertices $v$, $w \in V$, that $\dist{v}{w} = k$ is equivalent to that $(v, w) \in R_k$, and we have 
\begin{equation*}
\Gamma_{j} (v) \cap \Gamma_{1} (w) = \setcond{u \in V}{(v, u) \in R_j , (u, w) \in R_1}. 
\end{equation*}
Hence we can find that $X$ is a distance-regular graph with the intersection array 
$(p_{1,1}^{0}, p_{2,1}^{1}, \cdots, p_{i,1}^{i-1}, \cdots; p_{0,1}^{1}, p_{1,1}^{2}, \cdots, p_{i-1,1}^{i}, \cdots)$ 
(the $i$-th entry of the former array is $p_{i,1}^{i-1}$, and that of the latter array is $p_{i-1,1}^{i}$). 

Conversely, suppose that $X$ is a distance-regular graph with the intersection array $(b_0, b_1, b_2, \cdots; c_1, c_2, c_3, \cdots)$. 
Since the conditions $\mathrm{(i)}$ and $\mathrm{(ii)}$ in Definition \ref{DefAssSch} hold, 
we now check that the condition $\mathrm{(iii)}$ holds. 

Consider the family of matrices $\set{A^{(i)}}_{i \in I} \subset \Matsp{V}{\Integer}$ whose entries $A_{v,w}^{(i)}$ are defined as 
\begin{equation} \label{eqDistmat}
A_{v,w}^{(i)} = 
 \begin{cases}
  1 & \left( (v, w) \in R_i \right), \\
  0 & \left( (v, w) \notin R_i \right). 
 \end{cases}
\end{equation}
Then, we find that 
\begin{equation} \label{eqLinearize}
A^{(i)} A^{(1)} = b_{i-1} A^{(i-1)} + (b_0 - b_i - c_i) A^{(i)} + c_{i+1} A^{(i+1)}
\end{equation}
for any $i \in I$ since 
\begin{align*}
\left( A^{(i)} A^{(1)} \right)_{v,w} =& \abs{\setcond{u \in V}{(v, u) \in R_i , (u, w) \in R_1}} \\
=& \abs{\Gamma_{i} (v) \cap \Gamma_{1} (w)} \\
=& \begin{cases}
       b_{i-1} & \left( (v, w) \in R_{i-1} \right), \\
       b_0 - b_i - c_i & \left( (v, w) \in R_{i} \right), \\
       c_{i+1} & \left( (v, w) \in R_{i+1} \right), \\
       0 & \left( \text{otherwise} \right).
     \end{cases}
\end{align*}
Here, we use the convention that $b_{-1} = b_{s} = 0$ and $c_{0} = c_{s+1} = 0$. 
The equation \eqref{eqLinearize} implies that $\left( A^{(1)} \right)^{n}$ 
can be written as a linear combination of $A^{(k)}$'s for any $n \in \Natural$. 

On the other hand, from \eqref{eqLinearize}, 
we can verify by induction on $i$ that each $A^{(i)}$ can be expressed as a polynomial of $A^{(1)}$. 
Therefore, it turns out that the product of two matrices $A^{(i)} A^{(j)}$ can be written as a linear combination of $A^{(k)}$'s. 
Moreover, the coefficients in the linear combination expressing $A^{(i)} A^{(j)}$ must be all non-negative integers. 

Let $p_{i,j}^{k} \in \Nonnegative$ denote the coefficient of $A^{(k)}$ in the linear combination expressing $A^{(i)} A^{(j)}$. 
(Note that $A^{(k)}$'s are linearly independent over $\Complex$.) 
Then, for any $(v, w) \in R_k$, we obtain that 
\begin{align}
&\abs{\setcond{u \in V}{(v, u) \in R_i , (u, w) \in R_j}} \notag \\
=& \left( A^{(i)} A^{(j)} \right)_{v,w} \notag \\
=& \sum_{l \in I} p_{i,j}^{l} A_{v,w}^{(l)} \notag \\
=& p_{i,j}^{k} \label{eqAssSch}
\end{align}
since 
\begin{equation*}
A_{v,w}^{(l)} = \begin{cases}
                 1 & (l = k), \\
                 0 & (l \neq k).
                \end{cases}
\end{equation*}
The calculations \eqref{eqAssSch} warrant that $\left(V, \R (X) \right)$ satisfies 
the condition $\mathrm{(iii)}$ in Definition \ref{DefAssSch} and that $\left(V, \R (X) \right)$ forms an association scheme. 
\end{proof}

In general, it is difficult to express $p_{i,j}^{k}$ in terms of entries $b_l$'s and $c_l$'s of the intersection array. 

Here are several useful formulas on intersection numbers $p_{i,j}^{k}$ of an association scheme. 
Those are known as classical results, and one can find the same formulas on some textbooks, \cite{Bro-Coh-Neu} etc. 
Nevertheless, the authors could not find any proofs of the following identities except for $\mathrm{(e)}$, 
which is written in \cite{Bro-Coh-Neu}. 
The proofs of the followings consist of only elementary calculus in combinatorics. 

\begin{proposition} \label{PropAssEq}
Let $\left( Y, \R = \set{R_i}_{i \in I} \right)$ be an association scheme and $R_0 \in \R$ be the diagonal set of $Y \times Y$. 
Then, for any $i$, $j$, $k$, $m \in I$, the following identities hold. 

\begin{itemize}
\item[$\mathrm{(a)}$] $p_{0,j}^{k} = \delta_{j,k}$. 
\item[$\mathrm{(b)}$] $p_{i,j}^{0} = \delta_{i,j} p_{j,j}^{0}$. 
\item[$\mathrm{(c)}$] $p_{i,j}^{k} = p_{j,i}^{k}$. 
\item[$\mathrm{(d)}$] $\sum_{j \in I} p_{i,j}^{k} = p_{i,i}^{0}$. 
\item[$\mathrm{(e)}$] $\sum_{l \in I} p_{i,j}^{l} p_{l,k}^{m} = \sum_{l \in I} p_{j,k}^{l} p_{i,l}^{m}$. 
\item[$\mathrm{(f)}$] $p_{i,j}^{k} p_{k,k}^{0} = p_{i,k}^{j} p_{j,j}^{0}$. 
\end{itemize} 
\end{proposition}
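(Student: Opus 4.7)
The plan is to prove each of the six identities by elementary combinatorial arguments directly from the definition of $p_{i,j}^{k}$, using the symmetry of each $R_i$, the fact that $\R$ is a partition of $Y \times Y$, and (when the index set $I$ is infinite) the finiteness of each intersection number.

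Identities (a)--(d) should follow almost immediately from the definition. For (a), I would fix a representative pair $(x,y) \in R_k$ and observe that $(x,z) \in R_0$ forces $z = x$, so $p_{0,j}^{k}$ equals $1$ if $(x,y) \in R_j$ and $0$ otherwise, which is $\delta_{j,k}$ by the partition. For (b), I would take $y = x$, rewrite the condition $(z,x) \in R_j$ as $(x,z) \in R_j$ using the symmetry of $R_j$, and then use disjointness of the $R_l$'s to force $i = j$; when $i = j$, the count reduces by definition to $p_{i,i}^{0}$. For (c), I would pass from $(x,y) \in R_k$ to $(y,x) \in R_k$ via the symmetry of $R_k$ and apply the symmetry of $R_i$ and $R_j$ termwise to build a bijection between the sets counted by $p_{i,j}^{k}$ and $p_{j,i}^{k}$. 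For (d), I would fix $(x,y) \in R_k$ and note that the set $\setcond{z \in Y}{(x,z) \in R_i}$ is partitioned over $j \in I$ according to which $R_j$ contains $(z,y)$; the sum therefore counts the full set, and applying the definition of $p_{i,i}^{0}$ to the pair $(x,x) \in R_0$ shows its cardinality equals $p_{i,i}^{0}$.

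The identities (e) and (f) are the more substantive ones, and I would prove both by double counting. For (e), fix $(x,y) \in R_m$ and consider
\begin{equation*}
N = \abs{\setcond{(u, v) \in Y \times Y}{(x, u) \in R_i,\ (u, v) \in R_j,\ (v, y) \in R_k}}.
\end{equation*}
Partitioning these pairs by the unique $l \in I$ with $(x,v) \in R_l$ and using the independence of intersection numbers on the representative pair gives $N = \sum_{l \in I} p_{l,k}^{m} \cdot p_{i,j}^{l}$; partitioning instead by the unique $l \in I$ with $(u,y) \in R_l$ gives $N = \sum_{l \in I} p_{i,l}^{m} \cdot p_{j,k}^{l}$. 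Equating the two expressions yields (e). For (f), fix $y \in Y$ and set
\begin{equation*}
M = \abs{\setcond{(x, z) \in Y \times Y}{(x, y) \in R_k,\ (x, z) \in R_i,\ (z, y) \in R_j}}.
\end{equation*}
Counting $M$ by first choosing $x$ with $(x,y) \in R_k$ (of which there are $p_{k,k}^{0}$, by applying (b) after the symmetry of $R_k$) and then $z$ gives $M = p_{k,k}^{0} \cdot p_{i,j}^{k}$. Counting $M$ by first choosing $z$ with $(z,y) \in R_j$ (of which there are $p_{j,j}^{0}$) and then $x$ gives $M = p_{j,j}^{0} \cdot p_{i,k}^{j}$, where the inner factor arises by applying the definition to the pair $(z,y) \in R_j$ and using the symmetry of $R_i$ to interchange $(x,z)$ and $(z,x)$.

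The only genuine obstacle is the bookkeeping in (e): one must verify that both reindexations by $l$ partition the same finite set $N$, and that the sums are effectively finite. Finiteness is not a problem, since any pair $(u,v)$ contributing to $N$ determines a unique $l$ with $(x,v) \in R_l$ (respectively $(u,y) \in R_l$), and only those $l$'s with $p_{l,k}^{m} \neq 0$ (respectively $p_{i,l}^{m} \neq 0$) can contribute, of which there are only finitely many because $N \leq p_{i,i}^{0}\,p_{j,j}^{0} < \infty$. All the remaining identities reduce, as indicated above, to symmetry and partition arguments.
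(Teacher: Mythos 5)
Your proof is correct and, for identities (a) through (e), follows essentially the same elementary counting arguments as the paper, including the double count of triples for (e). The only divergence is in (f): you give a direct double count of the pairs $(x,z)$ anchored at a fixed $y$, whereas the paper obtains (f) by substituting $m = 0$ into (e) and invoking (b) and (c); both routes are valid, and yours has the minor advantage of making (f) independent of (e), at the cost of one more counting argument.
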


\begin{proof}
For the identity $\mathrm{(a)}$, take $(x, y) \in R_k$. 
Then, we have 
\begin{equation*}
p_{0,j}^{k} = \abs{\setcond{z \in Y}{(x, z) \in R_0, (z, y) \in R_j}}. 
\end{equation*}
The set of which we are counting elements is empty when $j \neq k$ and equals to $\set{x}$ when $j = k$, 
so that we obtain $\mathrm{(a)}$. 

For the identity $\mathrm{(b)}$, take $(x, x) \in R_0$. 
(The set $R_0$ is the diagonal set of $Y \times Y$.) 
Then, we have 
\begin{equation*}
p_{i,j}^{0} = \abs{\setcond{z \in Y}{(x, z) \in R_i, (z, x) \in R_j}}. 
\end{equation*}
The desired identity is derived from the condition $\mathrm{(iii)}$ in Definition \ref{DefAssSch}. 

The identity $\mathrm{(c)}$ is also derived from the condition $\mathrm{(iii)}$ in Definition \ref{DefAssSch}. 

For the identity $\mathrm{(d)}$, take $(x, y) \in R_k$. 
Since $\R$ is a partition of $Y \times Y$, we have 
\begin{align*}
&\sum_{j \in I} p_{i,j}^{k} \\
=& \sum_{j \in I} \abs{\setcond{z \in Y}{(x, z) \in R_i, (z, y) \in R_j}} \\
=& \abs{\setcond{z \in Y}{(x, z) \in R_i}} \\
=& p_{i,i}^{0}. 
\end{align*}

To verify the identity $\mathrm{(e)}$, 
take $(x, y) \in R_m$ and count elements in the set 
\begin{equation*}
S = \setcond{(z, u) \in Y \times Y}{(x, z) \in R_i, (z, u) \in R_j, (u, y) \in R_k}. 
\end{equation*}
One can see that 
\begin{equation*}
p_{i,j}^{l} p_{l,k}^{m} = \abs{\setcond{(z, u) \in Y \times Y}{(x, u) \in R_l, (u, y) \in R_k, (x, z) \in R_i, (z, u) \in R_j}}, 
\end{equation*}
and it follows that $\sum_{l \in I} p_{i,j}^{l} p_{l,k}^{m} = \abs{S}$. 
On the other hand, one find that 
\begin{equation*}
p_{j,k}^{l} p_{i,l}^{m} = \abs{\setcond{(z, u) \in Y \times Y}{(x, z) \in R_i, (z, y) \in R_l, (z, u) \in R_j, (u, y) \in R_k}}
\end{equation*}
holds, and it follows that $\sum_{l \in I} p_{j,k}^{l} p_{i,l}^{m} = \abs{S}$. 
Combining these results, we obtain the desired identity. 

We can substitute $m = 0$ into the identity $\mathrm{(e)}$ and use $\mathrm{(b)}$ and $\mathrm{(c)}$ to get the identity $\mathrm{(f)}$. 
\end{proof}

Finally, we recall the definition of ``strongly regular graphs,'' a special class of distance-regular graphs. 

\begin{definition} \label{DefSRG}
A \textit{strongly regular graph} is a distance-regular graph with diameter two. 
\end{definition}

Strongly regular graphs can be characterized by four parameters $(n, k, \lambda, \mu)$, 
where $n = \abs{V}$, $k = b_0 = p_{1,1}^{0}$, $\lambda = p_{1,1}^{1}$ and $\mu = c_2 = p_{1,1}^{2}$. 
Hence the strongly regular graph with parameters $(n, k, \lambda, \mu)$ is called ``$(n, k, \lambda, \mu)$-strongly regular graph.'' 
(In this article, we regard complete graphs as not strongly regular since the parameter $\mu$ is not well-defined for complete graphs.) 


\section{Hermitian discrete hypergroups derived from distance-regular graphs} \label{SecDistReg}

\subsection{Wildberger's construction} \label{SecDRGWil}

Wildberger \cite{Wil95} gave a way to construct finite hermitian discrete hypergroups from a certain class of finite graphs. 
Coefficients appearing in the structure identities of such a discrete hypergroup are given 
by certain probabilities coming from a random walk on a corresponding graph. 
Actually, his method is applicable to some infinite graphs, so that we formulate his method in a general form. 
Here, we recall Wildberger's construction of a hermitian discrete hypergroup from a random walk on a graph. 

Given a finite or infinite connected graph $X = (V, E)$, whose vertices all have finite degree, 
and given a vertex $v_0 \in V$ as a ``base point,'' 
we consider the canonical partition $\R (X) = \set{R_i}_{i \in I}$ of $V \times V$ defined as \eqref{eqPartition1} -- \eqref{eqPartition2}. 

Now, we define a convolution $\circ = \circ_{v_0} : \Complex \R (X) \times \Complex \R (X) \rightarrow \Complex \R (X)$ 
for the base point $v_0$. 
We let $P_{i,j}^{k}$ for $i$, $j$, $k \in I$ denote the following probability: 
Consider a `jump' to a random vertex $w \in \Gamma_{j} (v)$ after a `jump' to a random vertex $v \in \Gamma_{i} (v_0)$. 
Let $P_{i,j}^{k}$ denote the probability that $w$ belongs to $\Gamma_{k} (v_0)$. 
These probabilities $P_{i,j}^{k}$ are explicitly given by 
\begin{equation} \label{eqDefP}
P_{i,j}^{k} = 
\frac{1}{\abs{\Gamma_{i} (v_0)}} \sum_{v \in \Gamma_{i} (v_0)} \frac{\abs{\Gamma_{j} (v) \cap \Gamma_{k} (v_0)}}{\abs{\Gamma_{j} (v)}}. 
\end{equation}
In general, the probabilities $P_{i,j}^{k}$ depend on the choice of the base point $v_0$. 

We note that the denominator $\abs{\Gamma_{j} (v)}$ can be zero so that $P_{i,j}^{k}$'s are not necessarily well-defined, 
but we can determine when $P_{i,j}^{k}$ are well-defined. 

\begin{proposition} \label{PropWellDefined}
Let $X = (V, E)$ be a connected graph whose vertices all have finite degrees and the base point $v_0 \in V$ be arbitrarily given. 
\begin{itemize}
\item[$\mathrm{(a)}$] When $X$ is infinite, $P_{i,j}^{k}$ are all well-defined. 
\item[$\mathrm{(b)}$] Suppose that $X$ is finite. 
Then, $P_{i,j}^{k}$ are all well-defined if and only if $X$ is a self-centered graph. 
\end{itemize}
\end{proposition}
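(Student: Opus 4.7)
The plan is to reduce both (a) and (b) to a single observation about the defining formula \eqref{eqDefP}: the quantity $P_{i,j}^{k}$ is well-defined precisely when its two denominators are nonzero, i.e., $\abs{\Gamma_{i}(v_0)} > 0$ and $\abs{\Gamma_{j}(v)} > 0$ for every $v \in \Gamma_{i}(v_0)$. Since $X$ is connected, each vertex $v \in V$ lies in exactly one sphere $\Gamma_{i}(v_0)$ (namely with $i = \dist{v_0}{v}$, which automatically belongs to $I$), so running $i$ through $I$ and $v$ through $\Gamma_{i}(v_0)$ just runs through all of $V$. Hence ``every $P_{i,j}^{k}$ is well-defined'' is equivalent to the single condition that $\Gamma_{j}(v) \neq \varnothing$ for every $v \in V$ and every $j \in I$, which in turn is $e(v) \geq \sup I$ for every $v \in V$. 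I would state and prove this reduction first, as a short claim, citing \eqref{eqEcc} and the definition of $I$ in \eqref{eqPartition1}.

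For (a), here $I = \Nonnegative$ and $\sup I = \infty$, so I would show that every vertex of an infinite connected graph with all degrees finite has $e(v) = \infty$. The key step is an induction on $n$ proving $\abs{\Gamma_{n}(v)} < \infty$, using $\Gamma_{n+1}(v) \subseteq \bigcup_{w \in \Gamma_{n}(v)} \Gamma_{1}(w)$ and the hypothesis that each $\abs{\Gamma_{1}(w)}$ is finite. If $e(v)$ were finite, connectedness would give $V = \bigsqcup_{n=0}^{e(v)} \Gamma_{n}(v)$, a finite union of finite sets, contradicting $\abs{V} = \infty$. (This argument also confirms that the index set $I$ in the infinite case is really all of $\Nonnegative$, since every nonnegative integer is realized as a distance from any fixed $v$.) Combined with the reduction above, (a) follows.

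For (b), here $\sup I = \diam{X}$, so the reduction says: all $P_{i,j}^{k}$ are well-defined iff $e(v) \geq \diam{X}$ for every $v \in V$. Because $e(v) \leq \diam{X}$ holds automatically by definition of the diameter, this is exactly $e(v) = \diam{X}$ for all $v$, i.e., self-centeredness. The only small extra point to check is that requiring $\abs{\Gamma_{i}(v_0)} > 0$ for all $i \in I$ is not a stronger condition than $e(v_0) \geq \max I$; this follows from connectedness, since any geodesic from $v_0$ to a vertex at distance $e(v_0)$ passes through a vertex at every intermediate distance, giving $\Gamma_{i}(v_0) \neq \varnothing$ for all $i \leq e(v_0)$.

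There is no genuine obstacle here; the arguments are elementary. The most delicate bookkeeping is in setting up the reduction cleanly — in particular, making sure that ``$v \in \Gamma_{i}(v_0)$ for some $i \in I$'' genuinely quantifies over all of $V$ (which uses connectedness and the fact that $I$ indexes exactly the realized distances), so that no quantifier over $j$ or $v$ gets vacuous. Once that reduction is in place, (a) and (b) split off cleanly, one from the finiteness-of-balls argument and the other from unraveling the definition of self-centered.
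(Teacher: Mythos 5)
Your proposal is correct and follows essentially the same route as the paper: the infinite case rests on the same ``finite union of finite spheres'' contradiction (the paper phrases it as showing an empty sphere would force $V = \bigcup_{l=0}^{j-1}\Gamma_l(v)$, you phrase it via $e(v)=\infty$), and the finite case rests on the same geodesic intermediate-distance argument for nonemptiness of the spheres. Your up-front reduction of well-definedness to the single condition $\Gamma_j(v)\neq\varnothing$ for all $v\in V$, $j\in I$ is a slightly cleaner packaging of what the paper does in two separate passes, but it is not a different proof.
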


\begin{proof}
First, we suppose that $X$ is infinite. 
It suffices to show that $\Gamma_{j} (v) \neq \varnothing$ for any $v \in V$ and any $j \in I$. 

If $\Gamma_{j} (v) = \varnothing$ for some $v \in V$ and some $j \in I$, 
we would find that $\Gamma_{j'} (v) = \varnothing$ for any $j' \in I$ such that $j' \geq j$. 
Indeed, a geodesic $v = v_0 \rightarrow v_1 \rightarrow \cdots \rightarrow v_{j'-1} \to v_{j'} = w$ from $v$ to $w \in \Gamma_{j'} (v)$ should pass through a vertex $v_j \in \Gamma_{j} (v)$ when $j' \geq j$.  
Then, we would have $V = \bigcup_{l=0}^{j-1} \Gamma_{l} (v)$. 

On the other hand, by our assumption that all vertices of $X$ have finite degrees, 
we could find that $\Gamma_{l} (v)$ must be a finite set for every $l \in \set{0, 1, \cdots, j-1}$. 
We had that the vertex set $V$, a union of finitely many finite sets, should be a finite set, 
but this is a contradiction to the assumption that $V$ is an infinite set, so that the claim $\mathrm{(a)}$ of the proposition has been proved. 

Next, we suppose that $X$ is finite and $P_{i,j}^{k}$ are all well-defined. 
We will show that $e(v) = \diam{X}$ for every $v \in V$. 

If there would exist a vertex $v \in V$ such that $e(v) < \diam{X}$, we had $\Gamma_{\diam{X}} (v) = \varnothing$. 
Then, setting $\dist{v_0}{v} = i$ and $\diam{X} = s$, we would find that 
\begin{equation*}
P_{i,s}^{k} = 
\frac{1}{\abs{\Gamma_{i}(v_0)}} \sum_{w \in \Gamma_{i}(v_0)} 
\frac{\abs{\Gamma_{s}(w) \cap \Gamma_{k}(v_0)}}{\abs{\Gamma_{s}(w)}} 
\end{equation*}
cannot be well-defined since $v \in \Gamma_{i}(v_0)$ and $\abs{\Gamma_{s} (v)} = 0$. 
Accordingly, we find that $e(v) = \diam{X}$ for every $v \in V$. 
This means that all of $e(v)$ coincide one another and we obtain that $e(v) = \rad{X}$ for every $v \in V$. 

Conversely, suppose that $X$ is finite and self-centered. 
A similar argument to the above one yields that $e(v) = \diam{X}$ for every $v \in V$. 
Then, we have that $\Gamma_{j} (v) \neq \varnothing$. 
Indeed, a geodesic starting from $v \in V$ of the length equal to $\diam{X}$ passes 
through a vertex belonging to $\Gamma_{j}(v)$ for each $j \in I$. 
This implies that the denominators of fractions in \eqref{eqDefP} never vanish. 
\end{proof}

When $P_{i,j}^{k}$ are all well-defined, we define a bilinear convolution $\circ = \circ_{v_0}$ on $\Complex \R (X)$ by the following identities: 
\begin{equation*}
R_i \circ_{v_0} R_j = \sum_{k \in I} P_{i,j}^{k} R_k \quad (i, j \in I).
\end{equation*}
Equipped with a convolution, $\Complex \R (X)$ becomes a unital $\Complex$-algebra which is not necessarily associative or commutative. 
(The existence of the neutral element will be shown below.)
In addition, we define a conjugate-linear map $* : \Complex \R (X) \rightarrow \Complex \R (X)$ by $* \mid_{\R (X)} = \mathrm{id}_{\R (X)}$, 
which is expected to be an involution on $\Complex \R (X)$. 
This map $*$ becomes an involution on $\Complex \R (X)$ if and only if $\Complex \R (X)$ is commutative. 

It can be easily verified that 
\begin{equation*}
P_{i,0}^{k} = \delta_{i,k}
=\begin{cases}
   1 & (k = i), \\
   0 & (k \neq i), \\
  \end{cases}
\quad
P_{0,j}^{k} = \delta_{j,k}
=\begin{cases}
   1 & (k = j), \\
   0 & (k \neq j), \\
  \end{cases}
\end{equation*}
so that $R_0 \in \R(X)$ should be the neutral element with respect to the convolution $\circ_{v_0}$ on $\Complex \R (X)$. 
One can also verify the followings.

\begin{proposition} \label{PropHg}
Let $X = (V, E)$ be a connected graph whose vertices all have finite degrees and the base point $v_0 \in V$ be arbitrarily given. 
Suppose that $P_{i,j}^{k}$ are well-defined for all $i$, $j$, $k \in I$. 
\begin{itemize}
\item[$\mathrm{(a)}$] For any $i$, $j \in I$, we have that $R_i \circ_{v_0} R_j \in \Complex \R (X)$ satisfies the followings: 
\begin{gather}
(R_i \circ_{v_0} R_j) (R_k) \in \Real_{+} \ \text{for all}\ k \in I, \label{eqProbmeas1} \\
\norm{R_i \circ_{v_0} R_j}_1 = 1, \label{eqProbmeas2} \\
\supp{R_i \circ_{v_0} R_j} \subset \set{\abs{i-j}, \abs{i-j}+1, \cdots, i+j}. \label{eqProbmeas3} 
\end{gather}
\item[$\mathrm{(b)}$] For $i$, $j \in I$, the neutral element $R_0$ belongs to $\supp{R_i \circ_{v_0} R_j}$ if and only if $i = j$. 
\end{itemize}
\end{proposition}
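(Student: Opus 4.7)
The plan is to verify each listed property by unpacking the definition of $P_{i,j}^{k}$ in \eqref{eqDefP}, using only the triangle inequality for the metric $d$ and the hypothesis, provided by Proposition \ref{PropWellDefined}, that every $\Gamma_{l}(u)$ appearing as a denominator is non-empty and finite. The argument splits naturally into three short verifications for part $\mathrm{(a)}$ and one equivalence for part $\mathrm{(b)}$.

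First, for \eqref{eqProbmeas1}, I would note that $P_{i,j}^{k}$ is an average of ratios of non-negative integers and hence lies in $\Real_{+}$, which gives $(R_{i} \circ_{v_{0}} R_{j})(R_{k}) = P_{i,j}^{k} \in \Real_{+}$. To establish the normalization \eqref{eqProbmeas2}, I would fix $v \in \Gamma_{i}(v_{0})$ and observe that the family $\set{\Gamma_{j}(v) \cap \Gamma_{k}(v_{0})}_{k \in I}$ partitions $\Gamma_{j}(v)$, simply because every vertex of $V$ has a unique distance from $v_{0}$. Summing over $k$ therefore gives $\sum_{k \in I} \abs{\Gamma_{j}(v) \cap \Gamma_{k}(v_{0})} = \abs{\Gamma_{j}(v)}$, and substituting this into \eqref{eqDefP} after swapping the summations yields $\sum_{k \in I} P_{i,j}^{k} = 1$, which is exactly $\norm{R_{i} \circ_{v_{0}} R_{j}}_{1} = 1$. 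For the support bound \eqref{eqProbmeas3}, I would apply \eqref{eqTriIneq}: if $w \in \Gamma_{j}(v) \cap \Gamma_{k}(v_{0})$ for some $v \in \Gamma_{i}(v_{0})$, then $\dist{v_{0}}{v} = i$, $\dist{v}{w} = j$ and $\dist{v_{0}}{w} = k$, so the triangle inequality forces $\abs{i - j} \leq k \leq i + j$. A value of $k$ outside this range makes every such intersection empty and hence $P_{i,j}^{k} = 0$.

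For part $\mathrm{(b)}$, the key observation is that $\Gamma_{0}(v_{0}) = \set{v_{0}}$ by \eqref{eqIndis}. Hence $P_{i,j}^{0} > 0$ if and only if there exists $v \in \Gamma_{i}(v_{0})$ with $v_{0} \in \Gamma_{j}(v)$, that is, with $\dist{v_{0}}{v} = j$; combined with $\dist{v_{0}}{v} = i$ this forces $i = j$. Conversely, if $i = j$, then Proposition \ref{PropWellDefined} guarantees $\Gamma_{i}(v_{0}) \neq \varnothing$, and any $v$ in this set satisfies $v_{0} \in \Gamma_{i}(v) = \Gamma_{j}(v)$, making $\abs{\Gamma_{j}(v) \cap \Gamma_{0}(v_{0})} = 1$ and hence $P_{i,j}^{0} > 0$.

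I do not anticipate any serious obstacle: the entire proof reduces to routine bookkeeping on the defining expression \eqref{eqDefP} together with the metric axioms \eqref{eqIndis} and \eqref{eqTriIneq}. The only care required is to keep track of which $\Gamma_{l}(u)$ could a priori be empty, but this has already been handled by Proposition \ref{PropWellDefined}, so every sum manipulated in the proof is finite and every denominator in \eqref{eqDefP} is strictly positive.
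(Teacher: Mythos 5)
Your proposal is correct and follows essentially the same route as the paper's proof: non-negativity directly from \eqref{eqDefP}, normalization via the partition $\set{\Gamma_{j}(v) \cap \Gamma_{k}(v_{0})}_{k \in I}$ of $\Gamma_{j}(v)$ and an exchange of summations, the support bound via the triangle inequality \eqref{eqTriIneq}, and part $\mathrm{(b)}$ via the observation that $v_{0} \in \Gamma_{j}(v)$ forces $j = \dist{v}{v_{0}} = i$. No gaps to report.
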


\begin{proof}
The first assertion \eqref{eqProbmeas1} immediately follows from \eqref{eqDefP}, the definition of $P_{i,j}^{k}$. 

Since $\setcond{\Gamma_{j} (v) \cap \Gamma_{k} (v_0)}{k \in I}$ forms a partition of $\Gamma_{j} (v)$, 
we can compute $\norm{R_i \circ_{v_0} R_j}_1$ to be 
\begin{align*}
\norm{R_i \circ_{v_0} R_j}_1 =& \sum_{k \in I} P_{i,j}^{k} \\
=& \sum_{k \in I} \frac{1}{\abs{\Gamma_{i} (v_0)}} 
     \sum_{v \in \Gamma_{i} (v_0)} \frac{\abs{\Gamma_{j} (v) \cap \Gamma_{k} (v_0)}}{\abs{\Gamma_{j} (v)}} \\
=& \sum_{v \in \Gamma_{i} (v_0)} \frac{1}{\abs{\Gamma_{i} (v_0)} \cdot \abs{\Gamma_{j} (v)}}
     \sum_{k \in I} \abs{\Gamma_{j} (v) \cap \Gamma_{k} (v_0)} \\
=& 1. 
\end{align*}
When $P_{i,j}^{k} > 0$, we can find a vertex $v \in \Gamma_{i} (v_0)$ such that $\Gamma_{j} (v) \cap \Gamma_{k} (v_0) \neq \varnothing$. 
Then, any $w \in \Gamma_{j} (v) \cap \Gamma_{k} (v_0)$ should satisfy that 
\begin{align*}
\abs{i - j} &= \abs{\dist{v_0}{v} - \dist{w}{v}} \\
&\leq k = \dist{v_0}{w} \\
&\leq \dist{v_0}{v} + \dist{v}{w} = i + j. 
\end{align*}
This means that $\supp{R_i \circ_{v_0} R_j} \subset \set{\abs{i - j}, \abs{i - j} + 1, \cdots, i+j}$. 
We have shown the claim $\mathrm{(a)}$ in the above arguments.  

Now, we prove the claim $\mathrm{(b)}$. 
If $i = j$, the base point $v_0$ belongs to $\Gamma_{i} (v) \cap \Gamma_{0} (v_0)$ for every $v \in \Gamma_{i} (v_0)$, 
so we have that $P_{i,i}^{0} > 0$. 
Next, we suppose that $R_0 \in \supp{R_i \circ_{v_0} R_j}$. 
Then, $P_{i,j}^{0} > 0$ holds, 
and this yields that there exists a vertex $v \in \Gamma_{i} (v_0)$ such that $\Gamma_{j} (v) \cap \Gamma_{0} (v_0) \neq \varnothing$. 
This causes $v_0$ to lie in $\Gamma_{j} (v)$. 
Therefore, we have that $i = \dist{v}{v_0} = j$. 
\end{proof}

Appealing to Proposition \ref{PropHg}, 
one finds that $\R(X)$ becomes a hermitian hypergroup if and only if the convolution $\circ_{v_0}$ is associative and commutative. 
The convolution $\circ_{v_0}$ is not always associative or commutative, 
and it is a problem when $\circ_{v_0}$ is both associative and commutative. 

An answer to this problem was given by Wildberger \cite{Wil95}; 
If $X$ is a strongly regular graph, then $\R (X)$ becomes a hermitian discrete hypergroup. 
Let $X = (V, E)$ be an $(n, k, \lambda, \mu)$-strongly regular graph. 
Then, structure identities of $\R(X) = \set{R_0, R_1, R_2}$ are given by 
\begin{gather}
R_1 \circ R_1 = \frac{1}{k} R_0 + \frac{\lambda}{k} R_1 + \frac{k-\lambda-1}{k} R_2, \label{eqSreg1} \\
R_1 \circ R_2 = R_2 \circ R_1 = \frac{\mu}{k} R_1 + \frac{k-\mu}{k} R_2, \label{eqSreg2} \\
R_2 \circ R_2 = \frac{1}{n-k-1} R_0 + \frac{k-\mu}{n-k-1} R_1 + \frac{n+\mu-2k-2}{n-k-1} R_2, \label{eqSreg3}
\end{gather}
which are independent of a choice of the base point $v_0$. 
(Since $R_0$ is the neutral element of $\Complex \R(K_n)$, we omit the identities for $R_0$.) 
The commutativity immediately follows from \eqref{eqSreg1} -- \eqref{eqSreg3}, and the associativity can be revealed by direct calculations. 
(Using Proposition \ref{PropAssCom} curtails necessary calculations.)


Similarly, if $X$ is distance-regular, we find that $\R (X)$ becomes a hermitian discrete hypergroup, whose structure is independent of $v_0$. 
Since 
\begin{align*}
\abs{\Gamma_{j} (v) \cap \Gamma_{k} (v_0)} =& \abs{\setcond{w \in V}{\dist{v}{w} = j, \dist{v}{v_0} = k}} \\
=& \abs{\setcond{w \in V}{(v, w) \in R_j, (w, v_0) \in R_k}} \\
=& p_{j,k}^{i}
\end{align*}
when $\dist{v}{v_0} = i$, we can compute $P_{i,j}^{k}$ to be 
\begin{equation} \label{eqCoefDRG}
P_{i,j}^{k} = 
\frac{1}{\abs{\Gamma_{i} (v_0)}} \sum_{v \in \Gamma_{i} (v_0)} \frac{\abs{\Gamma_{j} (v) \cap \Gamma_{k} (v_0)}}{\abs{\Gamma_{j} (v)}} 
= \frac{p_{j,k}^{i}}{p_{j,j}^{0}}. 
\end{equation}
(Pay attention to the subscripts in the numerator of the last member.) 
These constants are determined independently of a choice of $v_0$. 
Now, we show that the convolution $\circ$ is commutative and associative. 
The commutativity is deduced from the identities $\mathrm{(c)}$ and $\mathrm{(f)}$ in Proposition \ref{PropAssEq}; 
Since 
\begin{equation*}
p_{j,k}^{i} p_{i,i}^{0} = p_{j,i}^{k} p_{k,k}^{0} = p_{i,j}^{k} p_{k,k}^{0} = p_{i,k}^{j} p_{j,j}^{0}, 
\end{equation*}
we obtain that 
\begin{equation*}
P_{i,j}^{k} = \frac{p_{j,k}^{i}}{p_{j,j}^{0}} = \frac{p_{i,k}^{j}}{p_{i,i}^{0}} = P_{j,i}^{k}. 
\end{equation*}
This implies that $R_i \circ R_j = R_j \circ R_i$. 
For the associativity $(R_h \circ R_i) \circ R_j = R_h \circ (R_i \circ R_j)$, 
we use the identities $\mathrm{(c)}$, $\mathrm{(e)}$ and $\mathrm{(f)}$ in Proposition \ref{PropAssEq}. 
By direct calculations, we find that 
\begin{gather*}
(R_h \circ R_i) \circ R_j = \sum_{k \in I} \left( \sum_{l \in I} P_{h,i}^{l} P_{l,j}^{k} \right) R_k, \\
R_h \circ (R_i \circ R_j) = \sum_{k \in I} \left( \sum_{l \in I} P_{i,j}^{l} P_{h,l}^{k} \right) R_k, 
\end{gather*}
so it suffices to show that $\sum_{l \in I} P_{h,i}^{l} P_{l,j}^{k} = \sum_{l \in I} P_{i,j}^{l} P_{h,l}^{k}$ holds for all $k \in I$. 
Indeed, for any $k \in I$, computations proceed as follows:  
\begin{align*}
\sum_{l \in I} P_{h,i}^{l} P_{l,j}^{k} =& \frac{1}{p_{i,i}^{0} p_{j,j}^{0}} \sum_{l \in I} p_{j,k}^{l} p_{i,l}^{h} \\
=& \frac{1}{p_{i,i}^{0} p_{j,j}^{0}} \sum_{l \in I} p_{i,j}^{l} p_{l,k}^{h} \\
=& \frac{1}{p_{i,i}^{0} p_{j,j}^{0}} \sum_{l \in I} p_{j,i}^{l} p_{l,k}^{h} \\
=& \frac{1}{p_{i,i}^{0} p_{j,j}^{0}} \sum_{l \in I} \frac{p_{j,l}^{i} p_{i,i}^{0}}{p_{l,l}^{0}} \cdot p_{l,k}^{h} \\
=& \sum_{l \in I} \frac{p_{j,l}^{i}}{p_{j,j}^{0}} \cdot \frac{p_{l,k}^{h}}{p_{l,l}^{0}} \\
=& \sum_{l \in I} P_{i,j}^{l} P_{h,l}^{k}. 
\end{align*}
The above arguments provides the following proposition. 

\begin{theorem} \label{ThmDRGHG}
Let $X = (V, E)$ be a distance-regular graph and $v_0 \in V$ the base point. 
Then, $\R (X)$ becomes a hermitian discrete hypergroup. 
Moreover, the hypergroup structure of $\R (X)$ is independent of $v_0$. 
\end{theorem}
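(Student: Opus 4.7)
The plan is to leverage the formula $P_{i,j}^{k} = p_{j,k}^{i}/p_{j,j}^{0}$ already derived in \eqref{eqCoefDRG}, which expresses the convolution coefficients purely in terms of intersection numbers of the association scheme $(V, \R(X))$ guaranteed by Proposition \ref{PropDRGAS}. Because intersection numbers of an association scheme are intrinsic invariants of the scheme, this formula immediately implies that the structure constants of $\R(X)$ do not depend on the base point $v_0$; hence the second assertion of the theorem reduces to the first.

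For the first assertion, I would systematically verify the four axioms of Definition \ref{DefHG}. Conditions (iia), (iib), and (iv) are exactly the content of Proposition \ref{PropHg}; condition (iii) is automatic because the involution is defined to fix each $R_i$. The substantive work is condition (i): to show that $(\Complex\R(X), \circ, *)$ is an associative $*$-algebra with neutral element $R_0$. The neutral-element property is already established via $P_{i,0}^{k} = P_{0,i}^{k} = \delta_{i,k}$, and the fact that $*$ is an involution on the whole of $\Complex\R(X)$ will follow automatically once the convolution is shown to be commutative, since $*$ is conjugate-linear and acts as the identity on each $R_i$.

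Commutativity and associativity would then be proven by purely algebraic manipulations using the identities of Proposition \ref{PropAssEq}. For commutativity, chaining parts (c) and (f) yields $p_{j,k}^{i}/p_{j,j}^{0} = p_{i,k}^{j}/p_{i,i}^{0}$, so $P_{i,j}^{k} = P_{j,i}^{k}$, and thus $R_i \circ R_j = R_j \circ R_i$. For associativity, I would expand $(R_h \circ R_i) \circ R_j$ and $R_h \circ (R_i \circ R_j)$ as linear combinations of the $R_k$ and reduce the required coefficient identity
\begin{equation*}
\sum_{l \in I} P_{h,i}^{l} P_{l,j}^{k} = \sum_{l \in I} P_{i,j}^{l} P_{h,l}^{k}
\end{equation*}
to part (e) of Proposition \ref{PropAssEq}, after clearing denominators of the form $p_{l,l}^{0}$ via parts (c) and (f).

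The main obstacle is purely notational: keeping track of the subscript/superscript permutations when converting between the probabilistic coefficients $P_{i,j}^{k}$ and the intersection numbers $p_{i,j}^{k}$, which differ both by a normalization factor and by a transposition of indices. Once this dictionary is set up cleanly, the associativity check becomes a single application of Proposition \ref{PropAssEq}(e) sandwiched between two uses of (f), and commutativity is a one-line consequence of (f); no new graph-theoretic input beyond the distance-regularity of $X$ is required.
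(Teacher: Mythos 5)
Your proposal is correct and follows essentially the same route as the paper: both rest on the formula $P_{i,j}^{k} = p_{j,k}^{i}/p_{j,j}^{0}$ to get base-point independence, derive commutativity from Proposition \ref{PropAssEq} (c) and (f), reduce associativity to the coefficient identity $\sum_{l} P_{h,i}^{l} P_{l,j}^{k} = \sum_{l} P_{i,j}^{l} P_{h,l}^{k}$ handled by (e) together with (c) and (f), and delegate the remaining hypergroup axioms to Proposition \ref{PropHg}. No substantive difference to report.
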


There is another way to construct a hermitian discrete hypergroup from a distance-regular graph, 
which was also introduced by Wildberger \cite{Wil95}. 
This method is based on the ``Bose-Mesner algebra,'' which is associated to an association scheme. 
Let $(Y, \R)$ be an association scheme, where $\R = \set{R_i}_{i \in I}$ with an index set $I$. 
We define matrices $A^{(i)} = \left( A_{x,y}^{(i)} \right)_{x, y \in Y} \in \Matsp{Y}{\Integer}$ for each $i \in I$ as 
\begin{equation*}
A_{x,y}^{(i)} = 
 \begin{cases}
  1 & \left( (x, y) \in R_i \right), \\
  0 & \left( (x, y) \notin R_i \right), 
 \end{cases}
\end{equation*}
like \eqref{eqDistmat}. 
Then, a product $A^{(i)} A^{(j)}$ of two matrices can be written in a linear combination of $A^{(k)}$'s as
\begin{equation*}
A^{(i)} A^{(j)} = \sum_{k \in I} p_{i,j}^{k} A^{(k)}, 
\end{equation*}
where $p_{i,j}^{k} = 0$ except for finitely many $k \in I$. 
Moreover, it can be verified that, for any $i$, $j \in I$, $A^{(i)} A^{(j)} = A^{(j)} A^{(i)}$ 
from the identity $\mathrm{(c)}$ in Proposition \ref{PropAssEq}. 
Hence we have that the $\Complex$-vector space $\bigoplus_{k \in I} \Complex A^{(k)}$ 
turns into an associative and commutative $\Complex$-algebra with respect to the ordinary addition and multiplication of matrices. 
Setting $C^{(i)} = (p_{i,i}^{0})^{-1} A^{(i)}$ for each $i \in I$, we have 
\begin{align}
C^{(i)} C^{(j)} =& \sum_{k \in I} \frac{p_{i,j}^{k} p_{k,k}^{0}}{p_{i,i}^{0} p_{j,j}^{0}} C^{(k)} \notag \\
=& \sum_{k \in I} \frac{p_{j,k}^{i}}{p_{j,j}^{0}} C^{(k)}  \label{eqCoefAss}
\end{align}
from $\mathrm{(c)}$ and $\mathrm{(f)}$ in Proposition \ref{PropAssEq}. 
When $(Y, \R) = (V, \R (X))$ for some distance-regular graph $X = (V, E)$, 
comparing \eqref{eqCoefDRG} and \eqref{eqCoefAss} yields 
that $\set{C^{(i)}}_{i \in I}$ has the same structure of a hermitian discrete hypergroup as that of $\R (X)$ 
. 


\subsection{Examples of distance-regular graphs} \label{SecDRGEx}

Let us now see examples of distance-regular graphs. 
It seems that graph theorists usually consider only finite ones so that there are known many examples of finite distance-regular graphs. 
Some elementary examples and infinite ones will be introduced in this section. 
For more examples, see e.g.\ \cite{Bro-Coh-Neu}. 

\begin{itemize}
\item[$\mathrm{(i)}$] Complete graphs
\end{itemize}

One of the simplest examples of distance-regular graphs are complete graphs. 
For $n \in \Natural$ with $n \geq 2$, let $K_n$ denote the complete graph with $n$ vertices. 
Then, $\diam{K_n} = 1$ and the intersection array of $K_n$ is $(n-1; 1)$. 
The canonical partition $V(K_n) \times V(K_n)$, where $V(K_n)$ denotes the vertex set of $K_n$, consists of two sets, 
the diagonal set $R_0$ and its complement $R_1$. 
The intersection numbers $p_{i,j}^{k}$ of $(V(K_n), \R(K_n))$ are given by 
\begin{gather*}
p_{0,0}^{0} = 1, \quad p_{0,0}^{1} = 0, \\
p_{0,1}^{0} = p_{1,0}^{0} = 0, \quad p_{0,1}^{1} = p_{1,0}^{1} = 1, \\
p_{1,1}^{0} = n - 1, \quad p_{1,1}^{1} = n - 2, 
\end{gather*}
so that we can compute the coefficients $P_{i,j}^{k} = p_{j,k}^{i} / p_{j,j}^{0}$ to be 
\begin{gather*}
P_{0,0}^{0} = 1, \quad P_{0,0}^{1} = 0, \\
P_{0,1}^{0} = P_{1,0}^{0} = 0, \quad P_{0,1}^{1} = P_{1,0}^{1} = 1, \\
P_{1,1}^{0} = \frac{1}{n-1}, \quad P_{1,1}^{1} = \frac{n-2}{n-1}. 
\end{gather*}
By these computations, the structure identitiy of $\R(K_n)$ turns out that 
\begin{equation*}
R_1 \circ R_1 = \frac{1}{n-1} R_0 + \frac{n-2}{n-1} R_1. 
\end{equation*}

This hypergroup $\R(K_n)$ is isomorphic to a special case of $\Integer_q (2)$, which is called the ``$q$-deformation of $\Integer/2\Integer$.'' 
For details of $q$-deformation hypergroups, see \cite{KTY15}, \cite{Tsu15} for example. 

\begin{itemize}
\item[$\mathrm{(ii)}$] Platonic solids
\end{itemize}

Consider the platonic solid with $n$ vertices (of course we only consider the cases when $n = 4$, $6$, $8$, $12$, $20$). 
Let $V_n$ be the set of its vertices and $E_n$ the set of its edges. 
The graph $\mathcal{S}_n = (V_n, E_n)$ is known to be distance-regular. 
As for the structure identities of $\R(\mathcal{S}_n)$, we refer to Wildberger's article \cite{Wil94}. 

\begin{itemize}
\item[$\mathrm{(iii)}$] Infinite regular trees
\end{itemize}

\begin{figure}[t]
\begin{minipage}{0.5\hsize}
\centering
\includegraphics{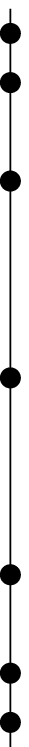}
\caption{Infinite $2$-regular tree}
\label{Fig2Tree}
\end{minipage}
\begin{minipage}{0.5\hsize}
\centering
\includegraphics{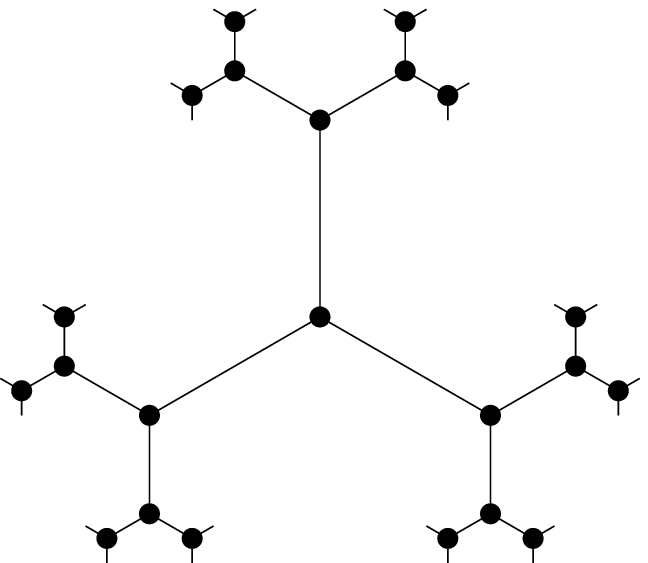}
\caption{Infinite $3$-regular tree}
\label{Fig3Tree}
\end{minipage}
\end{figure}

\begin{figure}
\centering
\includegraphics{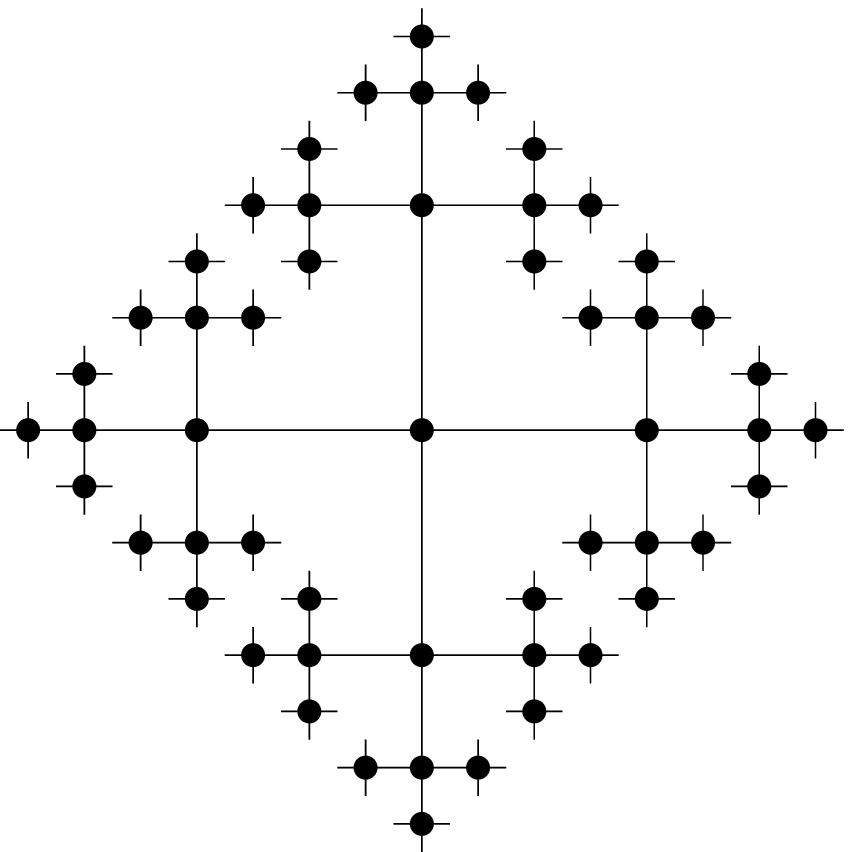}
\caption{Infinite $4$-regular tree}
\label{Fig4Tree}
\end{figure}

For each $n \in \Natural$ with $n \geq 2$, 
there exists a unique, but except for isomorphic ones, infinite $n$-regular connected graph without any cycles. 
(A \textit{cycle} means a path from a vertex to itself 
which does not pass through the same vertex twice except for the initial and the terminal vertex.)
We call such a graph the \textit{infinite $n$-regular tree}. 
(The term ``tree'' means a graph without any cycles.) 
For example, the infinite $2$-regular tree, $3$-regular tree and $4$-regular tree are partially drawn 
as in Figures \ref{Fig2Tree}, \ref{Fig3Tree} and \ref{Fig4Tree}, respectively.  
Let $\T_n = (V_n, E_n)$ denote the infinite $n$-regular tree and take an arbitrary vertex $v_0 \in V_n$ as the base point. 

We remark that $\T_n$ can be realized as a ``Cayley graph'' when $n$ is even. 
Cayley graphs arise here and in the following sections, so we recall the definition. 

\begin{definition} \label{DefCayley}
Let $G$ be a group and $\Omega$ be a subset of $G \setminus \set{1_{G}}$ satisfying 
\begin{equation} \label{eqInvClosed}
g \in G \Rightarrow g^{-1} \in G. 
\end{equation}
We define a graph $X = (V, E)$ as follows: 
\begin{itemize}
\item $V = G$. 
\item $E = \setcond{\set{g, h}}{g, h \in G,\, g^{-1} h \in \Omega}$. 
\end{itemize}
This graph $X$ is called the \textit{Cayley graph} and denoted by $\Cayley{G}{\Omega}$. 
\end{definition} 

A Cayley graph $\Cayley{G}{\Omega}$ must be $\abs{\Omega}$-regular and vertex-transitive. 
It becomes connected if and only if $\Omega$ generates $G$. 
 
For $m \in \Natural$, we let $F_m$ be the free group generated by $m$ symbols $g_1$, $g_2$, $\cdots$, $g_m$ 
and $\Omega_m = \tset{g_1^{\pm 1}, g_2^{\pm 1}, \cdots, g_m^{\pm 1}} \subset F_m$. 
The Cayley graph $\Cayley{F_m}{\Omega_m}$ is classically known to be isomorphic to the infinite $2m$-regular tree $\T_{2m}$. 

Now, we show that $\T_n$ is a distance-regular graph to prove the claim $\mathrm{(a)}$ of Theorem \ref{ThmInfty}. 
It will be proved in Proposition \ref{PropDRTree}. 
The following lemma gives some significant properties of the infinite regular trees and is useful before we prove that $\T_n$ is a distance-regular graph. 

\begin{lemma} \label{LemGeoUniqueTree}
Let $X = (V, E)$ be an infinite connected graph whose vertices all have finite degrees. 
\begin{itemize}
\item[$\mathrm{(a)}$] For every $v \in V$, we have $e(v) = \infty$. 
\item[$\mathrm{(b)}$] Suppose that $X$ has no cycles. 
Then, for any $v$, $w \in V$, there exists a unique geodesic from $v$ to $w$. 
\end{itemize}
\end{lemma}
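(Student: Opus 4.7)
For part $\mathrm{(a)}$, the plan is to argue by contradiction. Suppose that some $v \in V$ has $e(v) = n < \infty$; then every vertex lies within distance $n$ of $v$, so $V = \bigcup_{l=0}^{n} \Gamma_{l}(v)$. I would then show that each $\Gamma_{l}(v)$ is finite, by induction on $l$: $\Gamma_{0}(v) = \set{v}$ is finite, and $\Gamma_{l+1}(v) \subset \bigcup_{w \in \Gamma_{l}(v)} \Gamma_{1}(w)$ is a finite union of finite sets because every vertex has finite degree by hypothesis. Hence $V$ would be finite, contradicting the assumption that $X$ is infinite. This is essentially the same counting argument already invoked in the proof of Proposition \ref{PropWellDefined} $\mathrm{(a)}$.

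For part $\mathrm{(b)}$, existence is the easy half: connectedness of $X$ furnishes at least one path from $v$ to $w$, and a path of minimum length among these is a geodesic. The substantive content is uniqueness. Assume for contradiction that two distinct paths $P \colon v = x_0, x_1, \ldots, x_k = w$ and $Q \colon v = y_0, y_1, \ldots, y_m = w$ exist. Let $i$ be the smallest index at which they diverge, so $x_{i-1} = y_{i-1}$ but $x_i \neq y_i$; this $i$ is well-defined since the two paths share the initial vertex and differ somewhere. Next, let $j \geq i$ be the smallest index for which $x_j$ equals some $y_{j'}$ with $j' \geq i$; such a $j$ exists because $x_k = w = y_m$. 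By the minimality in the choices of $i$ and $j$, the closed walk
\[
x_{i-1}, x_i, x_{i+1}, \ldots, x_j = y_{j'}, y_{j'-1}, \ldots, y_i, y_{i-1}
\]
visits no intermediate vertex more than once, so it is an honest cycle in $X$, contradicting the acyclicity of $X$. Hence the path, and in particular the geodesic, from $v$ to $w$ is unique.

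The main obstacle is the bookkeeping in $\mathrm{(b)}$: one must select the "first divergence" index $i$ together with a "first reconvergence" pair $(j, j')$ so that the resulting closed walk is genuinely a cycle rather than a walk that revisits some interior vertex. Once this combinatorial choice is set up cleanly, the contradiction is immediate. By contrast, $\mathrm{(a)}$ reduces to a routine induction combined with the principle that a finite union of finite sets is finite, and relies only on the standing finite-degree hypothesis.
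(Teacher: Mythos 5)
Your proof is correct and follows essentially the same route as the paper: part $\mathrm{(a)}$ is the same finite-union-of-finite-spheres counting already used in the proof of Proposition \ref{PropWellDefined}, and part $\mathrm{(b)}$ extracts a cycle from the first divergence and first reconvergence of two putative routes, contradicting acyclicity. The only difference is that you establish uniqueness for arbitrary paths rather than for two geodesics of the common length $\dist{v}{w}$, which forces the slightly more careful choice of the reconvergence pair but is otherwise the same argument.
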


\begin{proof}
The claim $\mathrm{(a)}$ follows from the proof of Proposition \ref{PropWellDefined}. 

We now give a proof of the claim $\mathrm{(b)}$. 
This is clear when $v = w$, so that we may assume that $\dist{v}{w} > 0$. 
Suppose that there would exist two distinct geodesics 
\begin{gather*}
v = v_0 \rightarrow v_1 \rightarrow \cdots \rightarrow v_{i-1} \rightarrow v_i = w, \\
v = w_0 \rightarrow w_1 \rightarrow \cdots \rightarrow w_{i-1} \rightarrow w_i = w
\end{gather*}
from $v$ to $w$, where $i = \dist{v}{w}$. 
(There exist at least one geodesics from $v$ to $w$ since $X$ is connected.) 
Then, we could find at least one $j \in \set{1, 2, \cdots, i-1}$ such that $v_j \neq w_j$. 
Letting $k^{*} = \max \setcond{k \in \set{1, 2, \cdots, j-1}}{v_k = w_k}$ for this $j$, we would obtain a cycle 
\begin{multline*}
v_{k^{*}} \to v_{k^{*} + 1} \to \cdots \to v_{j-1} \to v_{j} = w_{j} \\
\to w_{j-1} \to \cdots \to w_{k^{*} + 1} \to w_{k^{*}} = v_{k^{*}}, 
\end{multline*}
which contradicts the assumption that $X$ has no cycles. 
\end{proof}

By using this lemma, we can prove that $\T_n$ is distance-regular. 

\begin{proposition} \label{PropDRTree}
Let $n \in \Natural$ with $n \geq 2$. 
Then the infinite $n$-regular tree $\T_n$ is a distance-regular graph with the intersection array $(n, n-1, n-1, n-1, \cdots ; 1, 1, 1, 1, \cdots)$ 
(i.e.\ $b_0 = n$, $b_1 = b_2 = b_3 = \cdots = n - 1$, $c_1 = c_2 = c_3 = \cdots = 1$). 
\end{proposition}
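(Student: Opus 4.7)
The plan is to verify the three conditions of Definition \ref{DefDRG} directly, using the fact that geodesics in a tree are unique (Lemma \ref{LemGeoUniqueTree}(b)) as the essential structural input. Condition (iii) is vacuous because $\diam{\T_n} = \infty$ by Lemma \ref{LemGeoUniqueTree}(a), so only conditions (i) and (ii) need to be checked. Condition (i) is immediate: by definition $\T_n$ is $n$-regular, so $\abs{\Gamma_{1}(v)} = n = b_0$ for every vertex $v$.

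For condition (ii), fix any $i \in \Natural$ and any pair of vertices $v$, $w \in V_n$ with $\dist{v}{w} = i$. I would argue that the $n$ neighbors of $w$ distribute as follows: exactly one lies in $\Gamma_{i-1}(v)$, none lies in $\Gamma_{i}(v)$, and the remaining $n-1$ lie in $\Gamma_{i+1}(v)$. This will yield $c_i = 1$ and $b_i = n - 1$ for every $i \geq 1$, giving the stated intersection array. The triangle inequality already restricts every neighbor $w'$ of $w$ to $\dist{v}{w'} \in \set{i-1, i, i+1}$, so only three cases need handling.

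For the case $\dist{v}{w'} = i-1$, any such $w'$ is necessarily the penultimate vertex of a geodesic $v \to \cdots \to w' \to w$ of length $i$; since Lemma \ref{LemGeoUniqueTree}(b) guarantees uniqueness of the geodesic from $v$ to $w$, exactly one such $w'$ exists, so $c_i = 1$. For the case $\dist{v}{w'} = i$, I would derive a contradiction by concatenating the unique geodesic from $v$ to $w$, the edge $w$--$w'$, and the unique geodesic from $w'$ back to $v$, then cutting off the common initial segment: letting $k$ be the largest index at which the two geodesics agree, the middle portion together with the edge $w$--$w'$ forms a cycle in $\T_n$, contradicting the defining assumption that $\T_n$ has no cycles. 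This case therefore does not occur.

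Having excluded distance $i$ and pinned down distance $i-1$ to a single neighbor, the remaining $n - 1$ neighbors of $w$ must satisfy $\dist{v}{w'} = i + 1$, which gives $b_i = n - 1$. This completes condition (ii) and hence the distance-regularity of $\T_n$ with intersection array $(n, n-1, n-1, \ldots; 1, 1, 1, \ldots)$. The only subtle step is the cycle-extraction argument ruling out neighbors at the same distance $i$; everything else is a direct unpacking of definitions together with uniqueness of geodesics.
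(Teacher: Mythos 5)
Your proposal is correct and follows essentially the same route as the paper: both arguments rest on the uniqueness of geodesics in $\T_n$ (Lemma \ref{LemGeoUniqueTree}) to get $c_i = 1$, rule out neighbors of $w$ at distance $i$ from $v$ by extracting a cycle from two geodesics after discarding their common initial segment, and then obtain $b_i = n-1$ by partitioning $\Gamma_1(w)$ among the three possible distance classes. Your explicit remark that condition $\mathrm{(iii)}$ of Definition \ref{DefDRG} is vacuous since $\diam{\T_n} = \infty$ is a small clarification the paper leaves implicit, but it does not change the argument.
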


\begin{proof}
We calculate the entries $b_i$'s and $c_i$'s of the intersection array of $\T_n$. 
Since $\T_n$ is $n$-regular, we have $b_0 = n$. 

Now we take arbitrary two vertices $v$, $w \in V_n$ of distance $i \geq 1$. 
We are going to show that 
$\abs{\Gamma_{i+1} (v) \cap \Gamma_{1} (w)} = n - 1$ and $\abs{\Gamma_{i-1} (v) \cap \Gamma_{1} (w)} = 1$. 

By Lemma \ref{LemGeoUniqueTree} $\mathrm{(b)}$, there exists a unique geodesic 
$v = v_0 \rightarrow v_1 \rightarrow \cdots \rightarrow v_{i-1} \rightarrow v_i = w$ from $v$ to $w$. 
Since $v_{i-1} \in \Gamma_{i-1} (v) \cap \Gamma_{1} (w)$, we have $\abs{\Gamma_{i-1} (v) \cap \Gamma_{1} (w)} \geq 1$. 
If $\Gamma_{i-1} (v) \cap \Gamma_{1} (w)$ would possess another vertex $w_{i-1}$, there would exist the second geodesic 
$v = w_0 \rightarrow w_1 \rightarrow \cdots \rightarrow w_{i-1} \rightarrow w_i = w$ from $v$ to $w$. 
This contradicts the uniqueness of the geodesic from $v$ to $w$, so we find that $\abs{\Gamma_{i-1} (v) \cap \Gamma_{1} (w)} = 1$. 

Next, we check that $\Gamma_{i} (v) \cap \Gamma_{1} (w) = \varnothing$. 
If there would exist a vertex $u \in \Gamma_{i} (v) \cap \Gamma_{1} (w)$, 
we would find a geodesic $v = u_0 \rightarrow u_1 \rightarrow \cdots \rightarrow u_{i-1} \rightarrow u_i = u$ from $v$ to $u$ 
and a geodesic $v = w_0 \rightarrow w_1 \rightarrow \cdots \rightarrow w_{i-1} \rightarrow w_i = w$ from $v$ to $w$. 
Letting $k^{*} = \max \setcond{k \in \set{1, 2, \cdots, i-1}}{u_k = w_k}$, we would obtain a cycle 
\begin{multline*}
u_{k^{*}} \rightarrow u_{k^{*} + 1} \rightarrow \cdots \rightarrow u_{i-1} \rightarrow u_i = u \\ 
\rightarrow w = w_i \rightarrow w_{i-1} \rightarrow \cdots \rightarrow w_{k^{*} + 1} \rightarrow w_k = u_k, 
\end{multline*}
which contradicts that $\T_n$ has no cycles. 
Therefore, it turns out that $\Gamma_{i} (v) \cap \Gamma_{1} (w) = \varnothing$. 

We find that $\Gamma_{1} (w)$ is partitioned into mutually disjoint three subsets, that is, 
\begin{equation*}
\Gamma_{1} (w) = \left( \Gamma_{i-1} (v) \cap \Gamma_{1} (w) \right) \sqcup \left( \Gamma_{i} (v) \cap \Gamma_{1} (w) \right) 
\sqcup \left( \Gamma_{i+1} (v) \cap \Gamma_{1} (w) \right). 
\end{equation*}
Hence we have 
\begin{align*}
\abs{\Gamma_{i+1} (v) \cap \Gamma_{1} (w)} &= 
\abs{\Gamma_{1} (w)} - \abs{\Gamma_{i-1} (v) \cap \Gamma_{1} (w)} - \abs{\Gamma_{i} (v) \cap \Gamma_{1} (w)} \\
&= n - 1. 
\end{align*}
These computations work for any $v$ and $w$ with $\dist{v}{w} = i$, so that we obtain that $b_i = n - 1$ and $c_i = 1$. 
\end{proof}

To obtain the structure identities of $\R (\T_n)$, we have to compute the intersection numbers $p_{i,j}^{k}$. 

\begin{proposition} \label{PropCoefAssTree}
Let $n \in \Natural$ with $n \geq 2$ and $p_{i,j}^{k}$ be the intersection numbers of the association scheme $(V_n, \R(\T_n))$. 
\begin{itemize}
\item[$\mathrm{(a)}$] For any $i \in \Natural$, we have $p_{i,i}^{0} = n (n - 1)^{i-1}$. 
\item[$\mathrm{(b)}$] For any $i$, $j$, $k \in \Natural$, we can get the following: 
\begin{equation} \label{eqCoefAssTree1}
p_{i,j}^{k} = 
  \begin{cases}
   (n - 1)^{i} & (j = i + k), \\
   (n - 2) (n - 1)^{i-h-1} & (j = i + k - 2h,\, 0 < h < \min \set{i, k}), \\
   (n - 1)^{i - \min \set{i, k}} & (j = \abs{i - k}), \\
   0 & (\text{otherwise}). 
  \end{cases}
\end{equation}
\end{itemize}
\end{proposition}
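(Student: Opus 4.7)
The plan is to exploit the uniqueness of geodesics in trees (Lemma \ref{LemGeoUniqueTree}$\mathrm{(b)}$) together with a median/Steiner-point analysis for three vertices. For part $\mathrm{(a)}$, note that by the definition of intersection numbers, $p_{i,i}^{0}$ equals $\abs{\Gamma_{i}(v)}$ for any $v \in V_n$. Since $\T_n$ has no cycles, each vertex at distance $i$ from $v$ is the endpoint of a unique non-backtracking path of length $i$ starting at $v$; the first edge admits $n$ choices and each of the $i-1$ subsequent edges admits $n-1$ choices (backtracking would force a cycle), yielding $p_{i,i}^{0} = n (n-1)^{i-1}$.

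For part $\mathrm{(b)}$, fix $(x,y) \in R_k$ and count $z \in V_n$ with $\dist{x}{z} = i$ and $\dist{z}{y} = j$. The central tool is the unique \emph{median} $m$ of the triple $\set{x, y, z}$: the unique vertex lying on all three geodesics $xy$, $xz$, $yz$, whose existence and uniqueness follow from Lemma \ref{LemGeoUniqueTree}$\mathrm{(b)}$. Setting $a = \dist{x}{m}$, $b = \dist{y}{m}$, $c = \dist{m}{z}$, the relations $a+b = k$, $a+c = i$, $b+c = j$ have the unique solution $a = (i+k-j)/2$, $b = (j+k-i)/2$, $c = (i+j-k)/2$. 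If any of $a, b, c$ fails to be a non-negative integer, then no such $z$ exists and $p_{i,j}^{k} = 0$, which accounts for the ``otherwise'' case.

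Otherwise, $m$ is uniquely determined as the vertex on the $xy$-geodesic at distance $a$ from $x$, and counting valid $z$ reduces to counting non-backtracking paths of length $c$ starting at $m$ whose first edge does \emph{not} lie on the $xy$-geodesic (else $m$ would fail to be the median). If $c = 0$, the only choice is $z = m$, contributing $1$. If $c \geq 1$, the number of admissible first edges at $m$ is $n - 2$ when $0 < a < k$ (both $xy$-edges at $m$ are forbidden) and $n - 1$ when $a \in \set{0, k}$ (only one edge is forbidden), while each of the remaining $c - 1$ edges offers $n - 1$ choices by the no-backtracking principle. Matching to the cases of the stated formula: $j = i+k$ corresponds to $a = 0$, $c = i$, giving $(n-1)^{i}$; $j = i+k-2h$ with $0 < h < \min\set{i,k}$ corresponds to $a = h$ and $c = i - h > 0$ with $0 < a < k$, giving $(n-2)(n-1)^{i-h-1}$; and $j = \abs{i-k}$ splits into the subcase $i \geq k$ (where $a = k$, $c = i-k$, giving $(n-1)^{i-k}$ and absorbing the $c = 0$ case when $i = k$) and the subcase $i < k$ (where $a = i$, $c = 0$, giving $1$), both in agreement with $(n-1)^{i - \min\set{i,k}}$.

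The main obstacle is not a deep idea but the bookkeeping: one must simultaneously manage three ``endpoint'' subcases (namely $c = 0$, $a = 0$, and $a = k$), verify that the three families appearing in the formula exhaustively and disjointly cover all $j$ with $p_{i,j}^{k} > 0$ (which amounts to $h \in \set{0, 1, \dots, \min\set{i,k}}$ running through $j = i+k-2h$), and confirm that the collapsed exponents $(n-1)^{i - \min\set{i,k}}$ and $(n-1)^{i-h-1}$ emerge cleanly. Once the median picture is in place and the no-backtracking counting is applied, the remaining computations are routine.
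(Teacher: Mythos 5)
Your proof is correct and follows essentially the same route as the paper's: your median $m$ of $\set{x,y,z}$ is exactly the branch vertex $u_h = w_h$ at which the paper's two geodesics emanating from $v$ separate, so your parameters $(a,b,c)$ are the paper's $(h,\,k-h,\,i-h)$, and your non-backtracking edge counts ($n-2$ at an interior median, $n-1$ at an endpoint, $n-1$ for each subsequent step) coincide with the paper's counting of the vertices $w_{h+1},\cdots,w_i$. The only cosmetic difference is in part $\mathrm{(a)}$, where you count non-backtracking paths directly while the paper runs an induction on $i$; both yield $n(n-1)^{i-1}$.
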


\begin{proof}
Take an arbitrary vertex $v_0 \in V_n$ as a base point. 
First, we compute $p_{i,i}^{0}$. 

We have $p_{1,1}^{0} = \abs{\Gamma_{1} (v_0)} = \deg (v_0) = n$, so the desired counting is obtained when $i = j = 1$. 
Now, we show that $p_{i,i}^{0} = n (n-1)^{i-1}$ by induction on $i \in \Natural$. 
We assume that $p_{i-1,i-1}^{0} = n (n - 1)^{i-2}$ for some $i \geq 2$. 
Take $w \in \Gamma_{i-1}(v_0)$ arbitrarily. 
Then, by Proposition \ref{PropDRTree}, 
just one vertex belongs to $\Gamma_{i-2}(v_0)$ among $n$ vetices adjacent to $w$, and the other $n-1$ vertices belong to $\Gamma_{i}(v_0)$. 
Thus we get $\abs{\Gamma_{i}(v_0) \cap \Gamma_{1}(w)} = n - 1$. 
If we take another vertex $w' \in \Gamma_{i-1}(v_0)$, 
it turns out that $\Gamma_{i}(v_0) \cap \Gamma_{1}(w)$ and $\Gamma_{i}(v_0) \cap \Gamma_{1}(w')$ must be disjoint. 
Hence we obtain 
\begin{align*}
\abs{\Gamma_{i}(v_0)} &= \abs{\Gamma_{i}(v_0) \cap \bigcup_{w \in \Gamma_{i-1}(v_0)} \Gamma_{1}(w)} \\
&= \abs{\bigcup_{w \in \Gamma_{i-1}(v_0)} \left( \Gamma_{i}(v_0) \cap \Gamma_{1}(w) \right)} \\
&= \sum_{w \in \Gamma_{i-1}(v_0)} \abs{\Gamma_{i}(v_0) \cap \Gamma_{1}(w)} \\
&= \abs{\Gamma_{i}(v_0) \cap \Gamma_{1}(w)} \cdot p_{i-1,i-1}^{0}
&= n (n-1)^{i-1} 
\end{align*}
from the induction hypothesis. 
 
Secondly, we show \eqref{eqCoefAssTree1}. 
We now determine the cases when $p_{i,j}^{k} = \abs{\Gamma_{i} (v) \cap \Gamma_{j} (v_0)} = 0$ for $v \in \Gamma_{k} (v_0)$. 
We note that there exists a one-to-one correspondence between $V_n$ and the set of all geodesics on $\T_n$ starting at $v_0$, 
which is derived from Lemma \ref{LemGeoUniqueTree} $\mathrm{(b)}$. 
Suppose that $w \in \Gamma_{i} (v) \cap \Gamma_{j} (v_0)$. 
Then, we can take the unique geodesic 
\begin{equation*}
v = u_0 \to u_1 \to \cdots \to u_{k-1} \to u_k = v_0
\end{equation*}
from $v$ to $v_0$ and also the unique geodesic 
\begin{equation*}
v = w_0 \to w_1 \to \cdots \to w_{i-1} \to w_i = w
\end{equation*}
from $v$ to $w$. 

If $u_0 = w_0$, $u_1 = w_1$, $\cdots$, $u_h = w_h$ and $u_{h+1} \neq w_{h+1}$ hold for some $0 \leq h < \min \set{i, k}$, then the path 
\begin{multline*}
v_0 = u_k \to u_{k-1} \to \cdots \to u_{h+1} \to u_h = w_h \\
\to w_{h+1} \to \cdots \to w_{i-1} \to w_i = w
\end{multline*}
must be the geodesic from $v_0$ to $w$. 
In these cases, we have $j = \dist{v_0}{w} = i + k - 2h$. 
(Remark that we always have $u_0 = w_0 = v$.)

Similarly, if $u_0 = w_0$, $u_1 = w_1$, $\cdots$, $u_{\min \set{i, k}} = w_{\min \set{i, k}}$, then the path 
\begin{multline*}
v_0 = u_k \to u_{k-1} \to \cdots \to u_{{\min \set{i, k}}+1} \to u_{\min \set{i, k}} = w_{\min \set{i, k}} \\ 
\to w_{{\min \set{i, k}}+1} \to \cdots \to w_{i-1} \to w_i = w
\end{multline*}
must be the geodesic from $v_0$ to $w$, and we get $j = \dist{v_0}{w} = i + k - 2 \min \set{i, k} = \abs{i - k}$. 
Therefore, we find that $w \in \Gamma_{i} (v) \cap \Gamma_{j} (v_0)$ for $v \in \Gamma_{k} (v_0)$ 
only if $j = i + k -2h$ for some $h \in \Nonnegative$ such that $0 \leq h \leq \min \set{i, k}$. 
In other words, when $v \in \Gamma_{k} (v_0)$, we obtain that $p_{i,j}^{k} = \abs{\Gamma_{i} (v) \cap \Gamma_{j} (v_0)} = 0$ 
unless $j = i + k -2h$ for some $h \in \Nonnegative$ such that $0 \leq h \leq \min \set{i, k}$. 

Next, we calculate $\abs{\Gamma_{i} (v) \cap \Gamma_{i+k-2h} (v_0)}$ for $h = 0$, $1$, $\cdots$, $\min \set{i, k}$. 
Let the path
\begin{equation*}
v = u_0 \to u_1 \to \cdots \to u_{i-1} \to u_k = v_0
\end{equation*}
be the unique geodesic from $v$ to $v_0$. 
Then, by the above argument, we find that $\abs{\Gamma_{i} (v) \cap \Gamma_{i+k-2h} (v_0)}$ is equal to the number of geodesics 
\begin{equation} \label{eqGeod}
v = w_0 \to w_1 \to \cdots \to w_{i-1} \to w_i = w
\end{equation}
with ($u_0 = w_0$,) $u_1 = w_1$, $\cdots$, $u_h = w_h$ and $u_{h+1} \neq w_{h+1}$ when $h < \min \set{i, k}$. 

When $0 < h < \min \set{i, k}$, each of such geodesics satisfies neither $w_{h+1} = u_{h+1}$ nor $w_{h+1} = w_{h-1}$. 
(It requires the latter condition that the path \eqref{eqGeod} is a geodesic. 
Since $\dist{v}{u_{h+1}} = h + 1 \neq h - 1 = \dist{v}{w_{h-1}}$, we remark that $u_{h+1} \neq w_{h-1}$.) 
Thus the number of possible vertices as $w_{h+1}$ is equal to $n - 2$. 
Furthermore, each of $w_{h+2}$, $\cdots$, $w_i (= w)$ can be chosen from exactly $n - 1$ candidates. 
(The candidates of $w_{h+2}$, $\cdots$, $w_i$ exclude a vertex which coincides with $w_h$, $\cdots$, $w_{i-2}$, respectively.) 
These observations yield that 
\begin{align*}
&\abs{\Gamma_{i} (v) \cap \Gamma_{i+k-2h} (v_0)} \\
=& \abs{\setcond{(w_{h+1}, \cdots, w_{i}) \in V_{n}^{i-h}}
                        {w_{h+1} \neq u_{h+1}, w_{h+1} \neq w_{h-1}, w_{h+2} \neq w_{h}, \cdots, w_{i} \neq w_{i-2}}} \\
=& (n-2) (n-1)^{i-l-1}
\end{align*}
for $0 < h < \min \set{i, k}$. 

When $h = 0$, the number of candidates of $w_1$ changes into $n - 1$ 
since we can arbitrarily take $w_1 \in \Gamma_{1} (w_0)$ except for $u_1$. 
Thus we obtain that 
\begin{align*}
&\abs{\Gamma_{i} (v) \cap \Gamma_{i+k} (v_0)} \\
=& \abs{\setcond{(w_{1}, \cdots, w_{i}) \in V_{n}^{i}}{w_{1} \neq u_{1}, w_{2} \neq w_{0}, \cdots, w_{i} \neq w_{i-2}}} \\
=& (n-1)^{i}. 
\end{align*}

When $i \leq k$ and $h = \min \set{i, k} = i$, all of $w_1$, $w_2$, $\cdots$, $w_i$ are uniquely determined, and we have that 
\begin{equation*}
\abs{\Gamma_{i} (v) \cap \Gamma_{\abs{i-k}} (v_0)} = 1 = (n-1)^{i-{\min \set{i, k}}}. 
\end{equation*}
When $i > k$ and $h = \min \set{i, k} = k$, the first $k$ vertices $w_1$, $w_2$, $\cdots$, $w_k$ are uniquely determined 
and each of $w_{k+1}$, $\cdots$, $w_i$ can be chosen from exactly $n - 1$ candidates. 
(The candidates of $w_{k+1}$, $\cdots$, $w_i$ exclude a vertex which coincides with $w_{k-1}$, $\cdots$, $w_{i-2}$, respectively.) 
Thus we obtain that 
\begin{align*}
&\abs{\Gamma_{i} (v) \cap \Gamma_{\abs{i-k}} (v_0)} \\
=& \abs{\setcond{(w_{k+1}, \cdots, w_{i}) \in V_{n}^{i-k}}{w_{k+1} \neq w_{k-1}, w_{k+2} \neq w_{k}, \cdots, w_{i} \neq w_{i-2}}} \\
=& (n-1)^{i-k} \\
=& (n-1)^{i-{\min \set{i, k}}}. 
\end{align*}
\end{proof}

The coefficients $P_{i,j}^{k}$ of the structure identities of $\R(\T_n)$ can be easily computed from Proposition \ref{PropCoefAssTree}, 
so that we obtain the following.  

\begin{corollary} \label{CorCoefTree}
Let $n \in \Natural$ with $n \geq 2$, $i$, $j \in \Natural$ and $k \in \Nonnegative$. 
Then, the coefficients $P_{i,j}^{k}$ of structure identities of $\R(\T_n)$ are given by 
\begin{equation*}
P_{i,j}^{k} = 
  \begin{cases}
   (n-1) / n & (k = i + j), \\
   (n-2) / n (n-1)^{h} & (k = i + j -2h,\, 0 < h < \min \set{i, j}), \\
   1 / n (n-1)^{\min \set{i, j} - 1} & (k = \abs{i-j}), \\
   0 & (\text{otherwise}).
 \end{cases}
\end{equation*}
\end{corollary}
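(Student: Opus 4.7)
The plan is to combine two ingredients that have already been established in the excerpt. First, since $\T_n$ is distance-regular by Proposition \ref{PropDRTree}, the identity \eqref{eqCoefDRG} applies, so that $P_{i,j}^{k} = p_{j,k}^{i}/p_{j,j}^{0}$. Second, Proposition \ref{PropCoefAssTree} supplies explicit values: part (a) gives $p_{j,j}^{0} = n(n-1)^{j-1}$, and part (b) gives a three-case formula for $p_{i,j}^{k}$. Thus the computation of $P_{i,j}^{k}$ reduces to a bookkeeping exercise.

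The first step I would perform is an index permutation: apply Proposition \ref{PropCoefAssTree}(b) with the substitution $(i,j,k) \mapsto (j,k,i)$ in its statement, so that the three conditions on the indices translate into conditions on our $(i,j,k)$. Explicitly, "$j = i+k$" becomes "$k = i+j$"; "$j = i+k-2h$ with $0 < h < \min\{i,k\}$" becomes "$k = i+j-2h$ with $0 < h < \min\{i,j\}$"; and "$j = |i-k|$" becomes "$k = |i-j|$". Outside of these cases $p_{j,k}^{i} = 0$, which matches the final "otherwise $0$" branch of the corollary.

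The second step is case-by-case arithmetic, which I would just carry out in a displayed calculation for each branch:
\begin{align*}
P_{i,j}^{i+j} &= \frac{(n-1)^{j}}{n(n-1)^{j-1}} = \frac{n-1}{n}, \\
P_{i,j}^{i+j-2h} &= \frac{(n-2)(n-1)^{j-h-1}}{n(n-1)^{j-1}} = \frac{n-2}{n(n-1)^{h}}, \\
P_{i,j}^{|i-j|} &= \frac{(n-1)^{j-\min\{i,j\}}}{n(n-1)^{j-1}} = \frac{1}{n(n-1)^{\min\{i,j\}-1}}.
\end{align*}
The boundary case $k = 0$ falls inside the third branch (it forces $i = j$, whence $\min\{i,j\} - 1 = i - 1$), and one can double-check it against Proposition \ref{PropHg}(b), which already guarantees that $P_{i,j}^{0} = 0$ whenever $i \neq j$.

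There is essentially no obstacle here; the only place where a slip is likely is the index translation in the first step, since our convention for $P_{i,j}^{k}$ places the summation index $k$ in the superscript while the formula \eqref{eqCoefDRG} reads it off the subscript of $p_{j,k}^{i}$. Once that permutation is performed carefully, the three cases of Proposition \ref{PropCoefAssTree}(b) line up exactly with the three non-trivial branches of the corollary, and the simplifications above complete the proof.
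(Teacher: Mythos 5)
Your proposal is correct and follows exactly the route the paper intends: substitute the values from Proposition \ref{PropCoefAssTree} into the identity $P_{i,j}^{k} = p_{j,k}^{i}/p_{j,j}^{0}$ of \eqref{eqCoefDRG}, with the index permutation and the resulting arithmetic carried out correctly (the paper itself omits these details, merely asserting the computation is easy). Your remark on the boundary case $k=0$, which is not literally covered by the hypotheses of Proposition \ref{PropCoefAssTree}(b), is a sensible extra check that the paper glosses over.
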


\begin{itemize}
\item[$\mathrm{(iv)}$] Linked triangles 
\end{itemize}

\begin{figure}[t]
\begin{minipage}{0.5\hsize}
\centering
\includegraphics{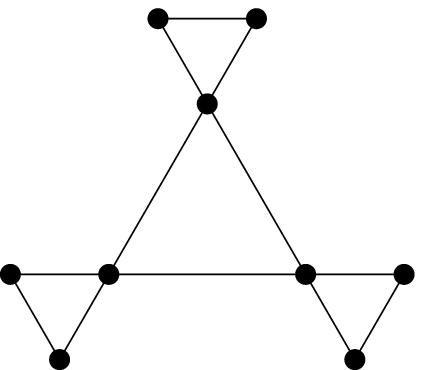}
\caption{Construction of $X_1$}
\label{FigLink1}
\end{minipage}
\begin{minipage}{0.5\hsize}
\centering
\includegraphics{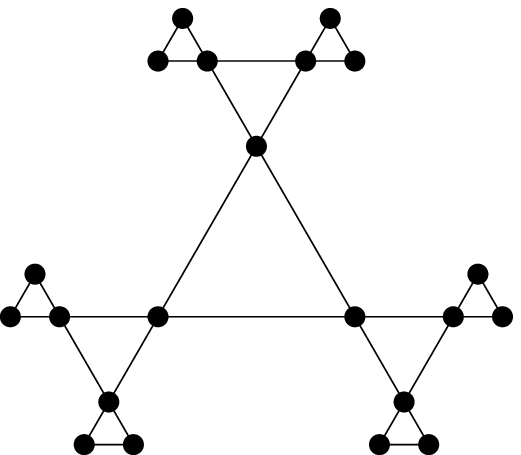}
\caption{Construction of $X_2$}
\label{FigLink2}
\end{minipage}
\end{figure}


Here, we see an example of infinite distance-regular graphs with cycles; 
Such an example can be constructed by linking triangular graphs inductively. 

As the beginning step, we let $X_0 = K_3 = (V_0, E_0)$. 
We make three copies of $K_3$ ``linked'' to $X_0$ to share distinct vertices. 
Then, we have a graph with nine vertices and nine edges. 
(See Figure \ref{FigLink1}.)
Let it be denoted by $X_1$. 
Next, we make copies of $K_3$ linked to each vertex of $X_1 = (V_1, E_1)$ that is not yet linked with another triangle. 
We need six copies of $K_3$ in this step to obtain the graph $X_2 = (V_2, E_2)$ with twenty-one vertices and thirty edges. 
(See Figure \ref{FigLink2}.)
Repeating this process, we have an ascending sequence $\set{X_n = (V_n, E_n)}_{n \in \Nonnegative}$ of finite graphs. 
We let an infinite graph $X_{\infty} = (V_{\infty}, E_{\infty})$ be defined by the union of all of $X_n$: 
It consists of the vertex set $V_{\infty} = \bigcup_{n \in \Nonnegative} V_n$ and the edge set $E_{\infty} = \bigcup_{n \in \Nonnegative} E_n$. 
In this article, we call the graph $X_{\infty}$ the ``linked-triangle graph.''

We shall show that $X_{\infty}$ is a distance-regular graph. 
Preparatory to the proof, we label each element of $V_{\infty}$ with a three-letter word. 
Consider the set $W$ of non-empty words of finite length composed of the three letters $a$, $b$ and $c$, 
in which every two consequent letters differ. 
We regard $W$ as a vertex set, and two words $v = l_1 l_2 \cdots l_m$, $w = l'_1 l'_2 \cdots l'_n \in W$, 
where $l_1, l_2, \cdots, l_m, l'_1, l'_2, \cdots, l'_n \in \set{a, b, c}$, 
are said to be adjacent if and only if one of the followings is satisfied: 
\begin{itemize}
\item It holds that $\abs{m-n} = 1$ and that $l_1 = l'_1$, $l_2 = l'_2$, $\cdots$, $l_{\min \set{m, n}} = l'_{\min \set{m, n}}$. 
\item It holds that $m = n$ and that $l_1 = l'_1$, $l_2 = l'_2$, $\cdots$, $l_{m-1} = l'_{m-1}$. 
\end{itemize}
Then, we have an infinite graph isomorphic to $X_{\infty}$, so that we regard every vertex of $X_{\infty}$ as labeled by an element of $W$. 
We identify the vertex set $V_{\infty}$ and the word set $W$ in the following arguments. 
In what follows, we use the notation $\length{v}$ for the length of the word $v \in W = V_{\infty}$. 

To show that $X_{\infty}$ is distance-regular, we have to grasp the basic properties of $X_{\infty}$ in the following lemma. 

\begin{lemma} \label{LemGeoUniqueLink}
\begin{itemize}
\item[$\mathrm{(a)}$] The linked-triangle graph $X_{\infty}$ admits no cycles of length four or greater. 
\item[$\mathrm{(b)}$] The linked-triangle graph $X_{\infty}$ is connected, 
and the geodesic from $v$ to $w$ in $X_{\infty}$ is unique for any $v$, $w \in V_{\infty}$. 
\end{itemize}
\end{lemma}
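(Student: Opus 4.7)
I will work with the word labeling of $V_{\infty} = W$ introduced just above the lemma, using the fact that every edge of $X_{\infty}$ is either a \emph{prefix edge} (between a word $p$ and an extension $p \alpha$) or a \emph{sibling edge} (between $p \alpha$ and $p \beta$ with $\alpha \neq \beta$). My plan for $(a)$ is a proof by contradiction. Assume that $C : v_0, v_1, \ldots, v_{n-1}, v_0$ is a simple cycle with $n \geq 4$, and set $L = \max_i \length{v_i}$; since the empty word is not a vertex, $L \geq 1$. If $L = 1$ then every $v_i$ belongs to $\set{a, b, c}$, so $n \leq 3$, a contradiction. For $L \geq 2$, I focus on a maximal ``run'' of consecutive indices $i, i+1, \ldots, i+k$ (mod $n$) on which $\length{v_{i+j}} = L$.

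Inside such a run every edge must be a sibling edge, so these vertices share a common prefix $p$ of length $L - 1$ and differ only in their last letter; since this last letter has to avoid the last letter of $p$, only two choices remain, so $k \leq 1$. The two edges that leave the run are prefix edges, and their outer endpoints $v_{i-1}$ and $v_{i+k+1}$ must both coincide with the unique length-$(L-1)$ prefix $p$; hence $v_{i-1} = v_{i+k+1}$. Since $k + 2 \in \set{2, 3} < n$, the indices $i - 1$ and $i + k + 1$ are distinct modulo $n$, contradicting simplicity of $C$.

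For $(b)$, connectedness is immediate: each $v = l_1 l_2 \cdots l_m \in V_{\infty}$ is joined to its initial letter $l_1$ by the descending path $v, l_1 l_2 \cdots l_{m-1}, \ldots, l_1$, and all single-letter words are mutually adjacent in the root triangle. For uniqueness of geodesics my plan is to extract a ``bigon'' and invoke $(a)$. Given two distinct geodesics $P : v = u_0, u_1, \ldots, u_d = w$ and $P' : v = u'_0, u'_1, \ldots, u'_d = w$, let $k$ be the smallest index where they disagree, and let $l \geq k$ be the smallest index such that $u_l = u'_{l'}$ for some $l' \geq k$; this $l$ exists because $u_d = u'_d$. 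Both sub-paths from $u_{k-1} = u'_{k-1}$ to $u_l = u'_{l'}$ are geodesic, so $l = l'$, and minimality of $l$ forces the interiors of the two sub-paths to be disjoint. Together they close up into a simple cycle of length $2(l - k + 1) \geq 4$, contradicting $(a)$.

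The step I expect to require the most care is the index arithmetic modulo $n$ in part $(a)$, together with checking that the bigon extracted in $(b)$ is genuinely a simple cycle of length at least $4$; the remaining arguments should reduce to straightforward bookkeeping with the word description.
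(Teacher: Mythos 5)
Your proposal is correct and follows essentially the same route as the paper: both prove $\mathrm{(a)}$ by examining the neighbors of a maximal-word-length vertex (the paper looks at the two cycle-neighbors of a single longest vertex, you at a maximal run of longest vertices, which amounts to the same analysis), and both derive uniqueness in $\mathrm{(b)}$ by extracting a cycle of length at least four from two distinct geodesics. Your bigon extraction in $\mathrm{(b)}$ supplies details that the paper leaves implicit, but the underlying argument is the same.
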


\begin{proof}
To prove the claim $\mathrm{(a)}$, we suppose that $X_{\infty}$ would contain a cycle 
\begin{equation*}
v_0 \to v_1 \to v_2 \to \cdots \to v_{L-1} \to v_0 
\end{equation*}
of length $L \geq 4$ with $v_0 = l_1 l_2 \cdots l_n$, where $n \in \Natural$ and $l_1$, $l_2$, $\cdots$, $l_n \in \set{a, b, c}$. 
Changing the initial vertex if necessary, we may assume that $\length{v_0} \geq \length{v_i}$ for $i = 1$, $2$, $\cdots$, $L-1$. 
If $n = 1$, then all of $v_i$'s are distinct and belong to $\set{a, b, c}$, but this is impossible. 
If $n \geq 2$, then, since both $v_1$ and $v_{L-1}$ are adjacent to $v_0$, 
we find that one of them should be $l_1 l_2 \cdots l_{n-1}$ and that the other $l_1 l_2 \cdots l'_n$ for $l'_n \neq l_n, l_{n-1}$. 
Changing the direction of the cycle if necessary, we may assume that $v_1 = l_1 l_2 \cdots l_{n-1}$ and $v_{L-1} = l_1 l_2 \cdots l'_n$. 
Since $v_{L-1}$ is adjacent to $v_0$, $v_1$ and $v_{L-2}$, and $v_{L-2} \neq v_0, v_1$, 
we would obtain that $\length{v_{L-2}} = \length{v_{L-1}} + 1 > \length{v_0}$. 
This is a contradiction to the maximality of $\length{v_0}$, so that we obtain the claim $\mathrm{(a)}$. 

To show the claim $\mathrm{(b)}$, 
we let the word representations of $v$ and $w$ be denoted by $v = l_1 l_2 \cdots l_m$ and $w = l'_1 l'_2 \cdots l'_n$, respectively, 
where $m$, $n \in \Natural$, and $l_1, l_2, \cdots, l_m, l'_1, l'_2, \cdots, l'_n \in \set{a, b, c}$. 
In addition, set 
\begin{equation*}
k^{*} = 
 \begin{cases}
  \min \setcond{1 \leq k \leq \min \set{m, n}}{l_k \neq l'_k} & (l_k \neq l'_k \ \text{for some}\ 1 \leq k \leq \min \set{m, n}) \\
  \min \set{m, n} + 1 & (l_k = l'_k \ \text{for}\ 1 \leq k \leq \min \set{m, n}). 
 \end{cases}
\end{equation*}
When $k^{*} = 1$, one can take a path  
\begin{multline*}
v = l_1 l_2 \cdots l_m \rightarrow l_1 l_2 \cdots l_{m-1} \rightarrow \cdots \rightarrow l_1 l_2 \rightarrow l_1 \\
\rightarrow l'_1 \rightarrow l'_1 l'_2 \rightarrow \cdots \rightarrow l'_1 l'_2 \cdots l'_{n-1} \rightarrow l'_1 l'_2 \cdots l'_n = w
\end{multline*}
from $v$ to $w$, and when $k^{*} \geq 2$, one can take a path 
\begin{multline*}
v = l_1 l_2 \cdots l_m \rightarrow l_1 l_2 \cdots l_{m-1} \rightarrow \cdots 
\rightarrow l_1 l_2 \cdots l_{k^{*}} \rightarrow l_1 l_2 \cdots l_{k^{*}-1} \\
= l'_1 l'_2 \cdots l'_{k^{*}-1} \rightarrow l'_1 l'_2 \cdots l'_{k^{*}} \cdots \rightarrow l'_1 l'_2 \cdots l'_{n-1} \rightarrow l'_1 l'_2 \cdots l'_n = w
\end{multline*}
from $v$ to $w$ . 
Therefore, $X_{\infty}$ is connected, so that there exists at least one geodesic from $v$ to $w$. 

Two distinct geodesics from $v$ to $w$ would allow us to find a cycle of length four or greater, 
whose existence contradicts to the claim $\mathrm{(a)}$ of Lemma \ref{LemGeoUniqueLink}.
\end{proof}

We can prove that $X_{\infty}$ is distance-regular by an argument similar to that for Lemma \ref{PropDRTree}. 

\begin{proposition} \label{PropDRLink}
The linked-triangle graph $X_{\infty}$ is a distance-regular graph with the intersection array $(4, 2, 2, 2, \cdots ; 1, 1, 1, 1, \cdots)$ 
(i.e.\ $b_0 = 4$, $b_1 = b_2 = b_3 = \cdots = 2$, $c_1 = c_2 = c_3 = \cdots = 1$). 
\end{proposition}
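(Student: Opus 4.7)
The plan is to mirror the proof of Proposition \ref{PropDRTree}, with the essential new ingredient being that every edge of $X_\infty$ is contained in a unique triangle. I would first establish three structural facts: (i) $X_\infty$ is $4$-regular, giving $b_0 = 4$ (evident from the construction, or from the word picture, in which each vertex has a parent, a sibling, and two children, suitably modified for length-one words); (ii) every vertex of $X_\infty$ lies in exactly two triangles and every edge in exactly one (both facts are immediate from the construction together with Lemma \ref{LemGeoUniqueLink}(a), since two triangles sharing an edge would produce a $4$-cycle); (iii) geodesics between any two vertices are unique, by Lemma \ref{LemGeoUniqueLink}(b).

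Now fix $v$, $w \in V_\infty$ with $\dist{v}{w} = i \geq 1$ and let $v = v_0 \to v_1 \to \cdots \to v_i = w$ be the unique geodesic. Exactly as in Proposition \ref{PropDRTree}, uniqueness yields $\Gamma_{i-1} (v) \cap \Gamma_{1} (w) = \set{v_{i-1}}$, so $c_i = 1$. For the distance-$i$ neighbor of $w$, let $T_1 = \set{v_{i-1}, w, u}$ be the unique triangle containing $\set{v_{i-1}, w}$; then $u$ is adjacent to both $v_{i-1}$ and $w$, so $i - 1 \leq \dist{v}{u} \leq i$, and the value $i - 1$ would force a second geodesic to $w$, hence $\dist{v}{u} = i$. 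Writing $T_2 = \set{w, u_1, u_2}$ for the other triangle through $w$, fact (ii) ensures $T_1 \cap T_2 = \set{w}$, so the four neighbors of $w$ are exactly $v_{i-1}$, $u$, $u_1$, $u_2$.

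It remains to prove $\dist{v}{u_j} = i + 1$ for $j = 1, 2$, and this is the crux of the argument. The case $\dist{v}{u_j} = i - 1$ is ruled out as before. To exclude $\dist{v}{u_j} = i$, I would use a cycle argument: letting $v = v_0' \to \cdots \to v_i' = u_j$ be the unique geodesic and $k$ the largest index with $v_k = v_k'$, concatenation of the two geodesics beyond $v_k$ with the edge $\set{w, u_j}$ produces
\begin{equation*}
v_k \to v_{k+1} \to \cdots \to v_{i-1} \to w \to u_j \to v_{i-1}' \to \cdots \to v_{k+1}' \to v_k,
\end{equation*}
which is a simple cycle of length $2(i - k) + 1$ because distances from $v$ separate the listed vertices. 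For $k < i - 1$ this cycle has length at least five, contradicting Lemma \ref{LemGeoUniqueLink}(a); for $k = i - 1$ it is the triangle $\set{v_{i-1}, w, u_j}$, which shares the edge $\set{v_{i-1}, w}$ with $T_1$ and so coincides with $T_1$ by fact (ii), forcing $u_j = u$ in contradiction to $u \in T_1 \setminus T_2$ and $u_j \in T_2 \setminus T_1$. Together with $b_0 = 4$ and $c_i = 1$, this yields $b_i = 2$ and the claimed intersection array.
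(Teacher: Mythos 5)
Your proof is correct and takes essentially the same route as the paper's: $b_0 = 4$ from $4$-regularity, $c_i = 1$ from uniqueness of geodesics, and $b_i = 2$ by identifying the unique third vertex of the triangle on the edge $\set{v_{i-1}, w}$ as the sole neighbour of $w$ at distance $i$ from $v$ and placing the remaining two neighbours at distance $i+1$. The only difference is that you justify that last placement with an explicit odd-cycle argument (ruling out a second distance-$i$ neighbour of $w$), a step the paper asserts without spelling out.
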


\begin{proof}
Since every vertex in $V_{\infty}$ possesses exactly four neighborhoods, we have $b_0 = 4$. 

We show that $c_i = 1$ for any $i \in \Natural$. 
Take an $i \in \Natural$ and two vertices $v$, $w \in V_{\infty}$ with $\dist{v}{w} = i$ arbitrarily. 
Then, there exists a unique geodesic 
\begin{equation*}
v = v_0 \to v_1 \to \cdots \to v_{i-1} \to v_i = w
\end{equation*}
from $v$ to $w$. 
Noting that $v_{i-1} \in \Gamma_{i-1}(v) \cap \Gamma_{1}(w)$, we find that $\abs{\Gamma_{i-1}(v) \cap \Gamma_{1}(w)} \geq 1$. 
On the other hand, the uniqueness of the geodesic ensures that $\abs{\Gamma_{i-1}(v) \cap \Gamma_{1}(w)} \leq 1$, 
so we have $\abs{\Gamma_{i-1}(v) \cap \Gamma_{1}(w)} = 1$. 
This means that $c_i = 1$. 

Next, we shall compute $\abs{\Gamma_{i+1}(v) \cap \Gamma_{1}(w)}$ for $i \in \Natural$ and $v$, $w \in V_{\infty}$ with $\dist{v}{w} = i$. 
Take the unique geodesic 
\begin{equation*}
v = v_0 \to v_1 \to \cdots \to v_{i-1} \to v_i = w
\end{equation*}
from $v$ to $w$. 
It follows that $v_{i-1} \notin \Gamma_{i+1}(v) \cap \Gamma_{1}(w)$ from the previous argument. 
Noting that $v_{i-1}$ and $w$ are adjacent and that there exists just one $3$-cycle containing $v_{i-1}$ and $w$, 
we find that there exists exactly one vertex $w'$ that is adjacent to both $v_{i-1}$ and $w$. 
Indeed, if there were two vertices $w'$ and $w''$ such that both $v_{i-1}$ and $w$ are adjacent to each of them, 
then we could find a $4$-cycle $v_{i-1} \to w' \to w \to w'' \to v_{i-1}$. 
This contradicts to the claim $\mathrm{(b)}$ of Lemma \ref{LemGeoUniqueLink}, hence we have the uniqueness of $w'$. 
The path $v \to v_1 \to \cdots \to v_{i-1} \to w'$ should be the geodesic from $v$ to $w'$, 
so that $w'$ belongs to $\Gamma_{i}(v) \cap \Gamma_{1}(w)$. 
The other two neighborhoods of $w$ belong to neither $\Gamma_{i-1}(v)$ nor $\Gamma_{i}(v)$, 
hence we find that they must belong to $\Gamma_{i+1}(v) \cap \Gamma_{1}(w)$ and $\abs{\Gamma_{i+1}(v) \cap \Gamma_{1}(w)} = 2$. 
This means that $b_i = 2$. 
\end{proof}

Let us determine the coefficients of the structure identities of $\R(X_{\infty})$. 
We can use the word representations to calculate the distance between two vertices of $X_{\infty}$. 

\begin{lemma} \label{LemDistLink}
Let $v$, $w \in V_{\infty}$ 
and $v = l_1 l_2 \cdots l_m$ and $w = l'_1 l'_2 \cdots l'_n$ be their word representations, 
where $m$, $n \in \Natural$ and $l_1$, $l_2$, $\cdots$, $l_m$, $l'_1$, $l'_2$, $\cdots$, $l'_n \in \set{a, b, c}$. 
Then, 
\begin{equation} \label{eqDistLink}
\dist{v}{w} = 
 \begin{cases}
  m + n - 2k^{*} - 1 & (l_i \neq l'_i \ \text{for some}\ 1 \leq i \leq \min \set{m, n}), \\
  \abs{m - n} & (l_i = l'_i \ \text{for}\ 1 \leq i \leq \min \set{m, n}), 
 \end{cases}
\end{equation}
where $k^{*} = \min \setcond{1 \leq k \leq \min \set{m, n}}{l_k \neq l'_k}$ 
in the case when $l_i \neq l'_i$ for some $1 \leq i \leq \min \set{m, n}$. 
\end{lemma}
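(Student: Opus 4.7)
The plan is to exhibit an explicit walk from $v$ to $w$ whose length matches the right-hand side of \eqref{eqDistLink}, and then to rule out shorter walks using Lemma \ref{LemGeoUniqueLink}(b) together with the ``cactus'' structure of $X_{\infty}$.

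In the prefix case, where $l_i = l'_i$ for all $1 \leq i \leq \min\set{m,n}$, I would assume without loss of generality $m \leq n$, so that $v$ is a prefix of $w$, and take the descent
\[ v = l_1 \cdots l_m \to l_1 \cdots l_m l'_{m+1} \to \cdots \to l'_1 \cdots l'_n = w, \]
a walk of length $n - m = \abs{m - n}$. Because adjacent words in $X_{\infty}$ always differ in length by at most one, any walk from $v$ to $w$ must contain at least $\abs{m-n}$ edges, so this descent is optimal.

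In the other case, I would let $p := k^{*} - 1$ (the length of the common prefix) and set $u_v := l_1 \cdots l_{k^{*}}$ together with $u_w := l_1 \cdots l_p l'_{k^{*}}$. These two words share their first $p$ letters, and their last letters are the two elements of $\set{a,b,c}$ distinct from $l_p$ (or simply the two distinct letters $l_1$, $l'_1$ of the root triangle $X_0$ when $p = 0$); by the definition of adjacency on $W$, they are therefore adjacent in $X_{\infty}$. Concatenating the ascent $v \to l_1 \cdots l_{m-1} \to \cdots \to u_v$, the single edge $u_v \to u_w$, and the descent $u_w \to \cdots \to w$ then gives a walk of the length claimed in \eqref{eqDistLink}.

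The hard part will be establishing the matching lower bound. The key structural input, which follows at once from Lemma \ref{LemGeoUniqueLink}(a), is that every edge of $X_{\infty}$ lies in exactly one triangle -- for if two triangles shared an edge, their union would contain a cycle of length four, contradicting that lemma. Consequently $X_{\infty}$ is a cactus whose blocks are precisely the triangles used to construct it, and its block tree is literally the tree of triangles generated inductively at the start of this section. In a cactus, the geodesic between two vertices must traverse the unique chain of blocks joining them in the block tree; applied to $v$ and $w$, this chain consists of the triangles attached to $l_1 \cdots l_j$ for $p \leq j \leq m-1$ on the $v$-side and for $p \leq j \leq n-1$ on the $w$-side, so any walk from $v$ to $w$ spends at least $m - k^{*}$ edges ascending, at least one bridging edge at the level of the common prefix, and at least $n - k^{*}$ edges descending. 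This matches the constructed walk and, together with Lemma \ref{LemGeoUniqueLink}(b), establishes the desired formula.
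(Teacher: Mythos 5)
Your overall strategy is sound and in one respect goes beyond the paper: the paper's proof merely writes down an explicit path, declares it to be the unique geodesic via Lemma \ref{LemGeoUniqueLink}(b), and reads off its length, with no argument that the path is actually shortest; you supply the missing lower bound (the length-parity argument in the prefix case, and the block-chain argument through the cut vertices $l_1 \cdots l_j$ in the branching case), and both of those arguments are correct.

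There is, however, one concrete lapse you should have caught. Count the edges of the walk you construct in the branching case: the ascent $v \to \cdots \to u_v$ has $m - k^{*}$ edges, the bridge $u_v \to u_w$ contributes one, and the descent has $n - k^{*}$ edges, for a total of $m + n - 2k^{*} + 1$. That is \emph{not} ``the length claimed in \eqref{eqDistLink},'' which reads $m + n - 2k^{*} - 1$; you assert the two agree without checking. In fact your value is the correct one and the displayed formula contains a sign error: for $v = a$, $w = b$ one has $k^{*} = 1$ and the formula gives $1 + 1 - 2 - 1 = -1$, whereas $\dist{a}{b} = 1 = m + n - 2k^{*} + 1$, and $m + n - 2k^{*} + 1$ is also the value used implicitly in the proof of Proposition \ref{PropCoefAssLink}. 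So your argument proves the corrected statement, and your matching upper and lower bounds of $m+n-2k^{*}+1$ should have alerted you that the printed right-hand side cannot be right. It is also worth noting that your bridging edge $u_v \to u_w$ at level $k^{*}$ is a genuine shortcut relative to the path displayed in the paper's proof, which descends one level further to $l_1 \cdots l_{k^{*}-1}$ before re-ascending and therefore has length $m + n - 2k^{*} + 2$; by the uniqueness in Lemma \ref{LemGeoUniqueLink}(b), the paper's displayed path is consequently not a geodesic when $k^{*} \geq 2$ (and it passes through the empty word when $k^{*} = 1$), so your route is not only different but is the one that actually yields the distance.
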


\begin{proof}
It is obvious that \eqref{eqDistLink} holds when $v = w$, so we may assume that $v \neq w$. 
The unique geodesic from $v$ to $w$ is given by 
\begin{multline*}
v = l_1 l_2 \cdots l_m \rightarrow l_1 l_2 \cdots l_{m-1} \rightarrow \cdots 
\rightarrow l_1 l_2 \cdots l_{k^{*}} \rightarrow l_1 l_2 \cdots l_{k^{*}-1} \\
= l'_1 l'_2 \cdots l'_{k^{*}-1} \rightarrow l'_1 l'_2 \cdots l'_{k^{*}} \cdots \rightarrow l'_1 l'_2 \cdots l'_{n-1} \rightarrow l'_1 l'_2 \cdots l'_n = w
\end{multline*}
if $l_i \neq l'_i$ for some $1 \leq i \leq \min \set{m, n}$ and by
\begin{multline*}
v = l_1 l_2 \cdots l_m \rightarrow l_1 l_2 \cdots l_{m-1} \rightarrow \cdots \rightarrow l_1 l_2 \rightarrow l_1 \\
\rightarrow l'_1 \rightarrow l'_1 l'_2 \rightarrow \cdots \rightarrow l'_1 l'_2 \cdots l'_{n-1} \rightarrow l'_1 l'_2 \cdots l'_n = w
\end{multline*}
if $l_i = l'_i$ for $i = 1, \cdots, \min \set{m, n}$. 
The desired evaluations are obtained from these observations. 
\end{proof}

\begin{proposition} \label{PropCoefAssLink}
Let $p_{i,j}^{k}$ be the intersection numbers of the association scheme $(V_{\infty}, \R(X_{\infty}))$. 
\begin{itemize}
\item[$\mathrm{(a)}$] For any $i \in \Natural$, we have $p_{i,i}^{0} = 2^{i+1}$. 
\item[$\mathrm{(b)}$] For $i$, $j \in \Natural$ and $k \in \Nonnegative$, we can get the following: 
\begin{equation} \label{eqCoefAssLink1}
p_{i,j}^{k} = 
  \begin{cases}
   2^{\max \set{i-k, 0}} & (j = \abs{i - k}), \\
   2^{\max \set{i-k, 0}+h-1} & (j = \abs{i - k} + 2h - 1,\, 1 \leq h \leq \min \set{i, k}), \\
   2^{i} & (j = i + k), \\
   0 & (\text{otherwise}). 
  \end{cases}
\end{equation}
\end{itemize}
\end{proposition}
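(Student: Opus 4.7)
For part (a), I observe that $p_{i,i}^{0} = \abs{\Gamma_i(v_0)}$ for any base point $v_0$, so it suffices to compute the sphere size using the intersection array from Proposition \ref{PropDRLink}. The edge-count identity $b_m \abs{\Gamma_m(v_0)} = c_{m+1} \abs{\Gamma_{m+1}(v_0)}$ combined with $b_0 = 4$, $b_m = 2$ and $c_m = 1$ for $m \geq 1$ gives $\abs{\Gamma_1(v_0)} = 4$ and the recursion $\abs{\Gamma_{m+1}(v_0)} = 2 \abs{\Gamma_m(v_0)}$, hence $p_{i,i}^{0} = 2^{i+1}$.

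For part (b), the case $k = 0$ reduces immediately to $p_{i,j}^0 = \delta_{i,j} \cdot 2^{i+1}$ by Proposition \ref{PropAssEq}(b) and part (a). For $k \geq 1$, I fix $v \in \Gamma_k(v_0)$ and count the $w$ with $\dist{v}{w} = i$ and $\dist{v_0}{w} = j$. By Lemma \ref{LemGeoUniqueLink}(b), each such $w$ corresponds bijectively to a unique geodesic $\rho : v = \rho_0 \to \cdots \to \rho_i = w$, which I classify by the length $h \in \set{0, 1, \cdots, \min\set{i, k}}$ of its common prefix with the unique geodesic $\pi : v = \pi_0 \to \cdots \to \pi_k = v_0$.

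The central structural observation to establish—and I expect this to be the main obstacle—is that every vertex $x \in V_{\infty}$ lies in exactly two triangles (immediate from the word representation, or from Lemma \ref{LemGeoUniqueLink}(a) together with the construction of $X_\infty$), so its four neighbors split into two adjacent pairs. Combining this with the intersection numbers $c_m = 1$, $b_0 - b_m - c_m = 1$, $b_m = 2$ (for $m \geq 1$) and the constraint that adjacent vertices' distances from $v$ differ by at most one, one finds for any $x$ with $\dist{v}{x} = m \geq 1$ that the partition must consist of one triangle $\set{x, p, s}$ with $p \in \Gamma_{m-1}(v)$, $s \in \Gamma_m(v)$, and one triangle $\set{x, c_1, c_2}$ with $c_1, c_2 \in \Gamma_{m+1}(v)$; in particular, the two $v$-children of $x$ are always mutually adjacent.

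With this tool I enumerate the geodesics by $h$. If $h = \min\set{i, k}$, one geodesic is a prefix of the other: for $i \leq k$ the target $w = \pi_i$ is forced, while for $i > k$ each of the $i - k$ extension steps beyond $v_0$ picks one of the two $v$-children, giving $2^{\max\set{i-k, 0}}$ geodesics at $j = \abs{i - k}$. If $0 \leq h < \min\set{i, k}$ and $\rho_{h+1}, \pi_{h+1}$ are adjacent (the ``shortcut'' scenario), then $\rho_{h+1}$ is determined—for $h \geq 1$ the structural fact at $\pi_h$ forces $\set{\rho_{h+1}, \pi_{h+1}}$ to be the two $v$-children of $\pi_h$, and for $h = 0$, $\rho_1$ is the unique third vertex of the triangle at $v$ containing $\pi_1$—and an induction using the same fact yields $\dist{v_0}{\rho_{h+m}} = k - h + m - 1$ for $m \geq 1$, so $j = i + k - 2h - 1$ with $2^{i - h - 1}$ continuations; reindexing $h' = \min\set{i, k} - h$ matches $p_{i, \abs{i-k} + 2h' - 1}^{k} = 2^{\max\set{i-k, 0} + h' - 1}$ for $1 \leq h' \leq \min\set{i, k}$. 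Finally, $h = 0$ with $\rho_1$ in the triangle at $v$ disjoint from $\pi_1$ (no shortcut, $2$ choices of $\rho_1$) produces $\dist{v_0}{\rho_m} = k + m$ by a parallel induction, so $j = i + k$ with total $2 \cdot 2^{i - 1} = 2^{i}$; all remaining values of $j$ admit no geodesic and hence give $p_{i, j}^{k} = 0$.
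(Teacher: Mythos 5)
Your proof is correct, but it takes a genuinely different route from the paper's. The paper works entirely in the word coordinates: it normalizes $w = a$ and $v = a l_1 \cdots l_k$ via distance-regularity, and then classifies the vertices of $\Gamma_i(v)$ by the word operations (delete letters, append letters, or delete-change-append) that produce them, reading off the distances to $a$ from Lemma \ref{LemDistLink}. You instead argue coordinate-free: you isolate the local structural fact that the four neighbors of each vertex split into two triangles, one holding the parent and sibling and one holding the two mutually adjacent children relative to any reference vertex, and then count unique geodesics from $v$ classified by the length of their common prefix with the geodesic $v \to v_0$. I checked your inductions (which triangle at $\rho_{h+m}$ contains the $v_0$-parent stays the ``$v$-parent triangle'' once the shortcut or non-shortcut branch is taken, so the distance to $v_0$ increments as claimed), your reindexing $h' = \min\set{i,k} - h$, and the total count $2^{\max\set{i-k,0}} + 2^{\max\set{i-k,0}}(2^{\min\set{i,k}}-1) + 2^{i} = 2^{i+1} = \abs{\Gamma_i(v)}$; all are consistent. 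Your approach buys independence from the word labelling (it would transfer to any graph with the same two-triangle local structure and unique geodesics), at the cost of the somewhat delicate triangle-tracking induction that you only sketch; the paper's approach makes each distance computation mechanical via Lemma \ref{LemDistLink} but is tied to the specific labelling. Two further points in your favor: you give an explicit proof of part $\mathrm{(a)}$ via the sphere-size recursion $b_m \abs{\Gamma_m(v_0)} = c_{m+1}\abs{\Gamma_{m+1}(v_0)}$, which the paper leaves implicit, and your separate treatment of $k=0$ via Proposition \ref{PropAssEq} $\mathrm{(b)}$ neatly sidesteps the fact that for $k=0$ the first and third cases of \eqref{eqCoefAssLink1} collide at $j=i$ and would each report $2^{i}$ rather than the correct $2^{i+1}$.
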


\begin{proof}
Take two vertices $v$, $w \in V_{\infty}$ with $\dist{v}{w} = k$. 
Since $X_{\infty}$ is a distance-regular graph, 
we may assume that the word representations of $v$ and $w$ are given by $v = a l_1 l_2 \cdots l_k$ and $w = a$, respectively, 
where $l_1$, $l_2$, $\cdots$, $l_k \in \set{a, b, c}$. 

We now classify the vertices belonging to $\Gamma_{i}(v)$ by distance from $w = a$. 
We implicitly use Lemma \ref{LemDistLink} in the following arguments. 

First, we consider the case when $i \leq k$. 
There are three ways to get a vertex in $\Gamma_{i}(v)$; 
The first one is to delete the last $i$ letters $l_k$, $\cdots$, $l_{k-i+1}$ from $v$. 
The second one is to add $i$ letters $l'_{k+1}$, $\cdots$, $l'_{k+i}$ to the tail of $v$. 
The third one is, for $1 \leq h \leq i$, to add $h - 1$ letters $l'_{k-i+h+1}$, $\cdots$, $l'_{k-i+2h-1}$ to the tail 
after deleting the last $i - h$ letters $l_{k}$, $\cdots$, $l_{k-i+h+1}$ from $v$ 
and changing the last letter $l_{k-i+h}$ of the remaining word into the other letter $l'_{k-i+h}$. 
The numbers of vertices provided by each way are found to be $1$, $2^{i}$ and $2^{h-1}$, 
and the distances between $a$ and a vertex provided by each way are $k - i$, $k + i$ and $k - i + 2h - 1$, respectively. 

Next, we consider the case when $i > k$. 
In this case, there are three ways to get a vertex in $\Gamma_{i}(v)$ given as follows; 
The first one is to add $i - k - 1$ letters $l'_{1}$, $\cdots$, $l'_{i-k-1}$ to the tail of $v$ 
after deleting the last $k$ letters $l_{k}$, $\cdots$, $l_{1}$ from $v$ 
and changing the remaining letter $a$ into the other letter $l'_{0}$. 
The second one is to add $i$ letters $l'_{k+1}$, $\cdots$, $l'_{k+i}$ to the tail of $v$. 
The third one is, for $1 \leq h \leq k$, to add $i - k + h - 1$ letters $l'_{h+1}$, $\cdots$, $l'_{i-k+2h-1}$ to the tail 
after deleting the last $k - h$ letters $l_{k}$, $\cdots$, $l_{h+1}$ from $v$ 
and changing the last letter $l_{h}$ of the remaining word into the other letter $l'_{h}$. 
The numbers of vertices provided by each way are found to be $2^{i-k}$, $2^{i}$ and $2^{i-k+h-1}$ 
(note that there are two candidates for $l'_{0}$ in the explanation of the first way), 
and the distances between $a$ and a vertex provided by each way are $i - k$, $i + k$ and $i - k + 2h - 1$, respectively. 
The conclusion \eqref{eqCoefAssLink1} can be deduced from these observations. 
\end{proof}

By Proposition \ref{PropCoefAssLink}, we can compute the structure identities of $\R(X_{\infty})$. 

\begin{corollary} \label{CorCoefLink}
Let $i$, $j \in \Natural$ and $k \in \Nonnegative$. 
Then, the coefficients $P_{i,j}^{k}$ of structure identities of $\R(X_{\infty})$ are given by 
\begin{equation*}
P_{i,j}^{k} = 
  \begin{cases}
   1/2 & (k = i + j), \\
   1/2^{\min \set{i, j} + 2 - h} & (k = \abs{i - j} + 2h - 1, 1 \leq h \leq \min \set{i, j}), \\
   1/2^{\min \set{i, j} + 1} & (k = \abs{i - j}), \\
   0 & (\text{otherwise}).
 \end{cases}
\end{equation*}
\end{corollary}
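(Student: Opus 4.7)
The plan is to combine two ingredients already in hand: the formula $P_{i,j}^{k} = p_{j,k}^{i} / p_{j,j}^{0}$ established in \eqref{eqCoefDRG} (valid because Proposition \ref{PropDRLink} confirms that $X_{\infty}$ is distance-regular, hence $\R(X_{\infty})$ is an association scheme) and the explicit values of the intersection numbers computed in Proposition \ref{PropCoefAssLink}. The whole argument is then just a substitution followed by a case-by-case simplification, with the small subtlety that the roles of the indices $i$ and $j$ in the formula \eqref{eqCoefAssLink1} must be swapped before dividing by $p_{j,j}^{0} = 2^{j+1}$.

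First I would rewrite \eqref{eqCoefAssLink1} with the substitution $(i,j,k) \mapsto (j,k,i)$, so that the numerator $p_{j,k}^{i}$ depends on whether $k = j+i$, $k = \abs{j-i}$, or $k = \abs{j-i} + 2h - 1$ for some $1 \le h \le \min\{i,j\}$, and is zero otherwise. I would then divide each branch by $2^{j+1}$. The $k = i+j$ branch gives $2^{j}/2^{j+1} = 1/2$ immediately, matching the first line. The zero branch is also immediate.

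The main bookkeeping is the $\max\{j-i,0\}$ that appears in the exponent of $p_{j,k}^{i}$, which I would like to rewrite in terms of $\min\{i,j\}$ so that the final answer looks symmetric in $i$ and $j$ (as it must, since the structure constants of a hermitian hypergroup are symmetric). I would split both remaining cases into the subcases $j \geq i$ (where $\max\{j-i,0\} = j-i$) and $j < i$ (where $\max\{j-i,0\} = 0$). In the case $k = \abs{i-j} + 2h - 1$, the exponent simplifies in both subcases to $h - \min\{i,j\} - 2$, giving $1/2^{\min\{i,j\}+2-h}$, and in the case $k = \abs{i-j}$ it simplifies to $-\min\{i,j\}-1$, giving $1/2^{\min\{i,j\}+1}$. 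Both match the claimed formula.

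There is no real obstacle here: the combinatorial work is already done in Proposition \ref{PropCoefAssLink}, and the remaining task is purely algebraic. The only thing to be careful about is keeping the subscripts of $p_{\cdot,\cdot}^{\cdot}$ in the right order when using \eqref{eqCoefDRG}, since it is $p_{j,k}^{i}$ and not $p_{i,j}^{k}$ that appears in the numerator; a naive reading could produce an expression that is not manifestly symmetric in $i, j$ and would then fail to match the $\min\{i,j\}$ in the statement.
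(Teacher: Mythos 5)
Your proposal is correct and matches the paper's (essentially unwritten) argument: the paper simply states that the corollary follows from Proposition \ref{PropCoefAssLink} via the relation $P_{i,j}^{k} = p_{j,k}^{i}/p_{j,j}^{0}$ of \eqref{eqCoefDRG}, which is exactly the substitution and case-by-case simplification you carry out. Your explicit check that $\max\set{j-i,0}$ collapses to the symmetric $\min\set{i,j}$ expressions in both subcases is the only computation needed, and it is done correctly.
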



\section{Non-distance-regular graphs producing a hypergroup} \label{SecNDR}

In the previous section, we certified that a random walk on any distance-regular graph produces a hermitian discrete hypergroup. 
However, we should note that a random walk on a certain non-distance-regular graph produces a hermitian discrete hypergroup. 
We are going to see several examples of non-distance-regular graphs on which a random walk produces a hermitian discrete hypergroup. 
It is a (graph theoretical) problem to determine pairs of a graph $X = (V, E)$ and a base point $v_0 \in V$ 
which make $(\R (X), \circ_{v_0})$ a hermitian discrete hypergroup. 


\subsection{Fundamental observations} \label{SecNDRGen}

For simplicity of arguments, we set some jargons for graphs. 

\begin{definition} \label{DefHGP}
Let $X = (V, E)$ be a connected graph whose vertices all have finite degrees and $v_0 \in V$. 
\begin{itemize}
\item[$\mathrm{(a)}$] The given graph $X$ is said to satisfy the \textit{self-centered condition} if $X$ is either infinite or self-centered. 
\item[$\mathrm{(b)}$] The pair $(X, v_0)$ is said to be \textit{hypergroup productive} 
if $\R(X)$ becomes a hermitian discrete hypergroup with respect to the convolution $\circ_{v_0}$. 
The graph $X$ is said to be \textit{hypergroup productive} if $(X, v_0)$ is hypergroup productive pair for any $v_0 \in V$. 
\end{itemize}
\end{definition}

It immediately follows from Proposition \ref{PropWellDefined} that the convolution $\circ_{v_0}$ on $\R(X)$ is well-defined 
if and only if $X$ satisfies the self-centered condition. 
We proved in Section \ref{SecDRGWil} that every distance-regular graph is a hypergroup productive graph. 

When $X = (V, E)$ is a distance-regular graph, 
a hypergroup structure of $(\R(X), \circ_{v_0})$ is independent of a choice of the base point $v_0$. 
On the other hand, a hypergroup structure of $(\R(X), \circ_{v_0})$ sometimes depends on a choice of the base point $v_0$ when $X$ is not distance-regular (see Section \ref{SecHGPGraph}). 
The following proposition shows a sufficient condition for that two choices of the base point give the same convolution on $\R(X)$. 

\begin{proposition} \label{PropVerTrans}
Let $X = (V, E)$ be a graph satisfying the self-centered condition and $v_0$, $v_1 \in V$. 
Suppose that there exists an automorphism $\varphi$ of $X$ such that $\varphi (v_0) = v_1$. 
Then, $\circ_{v_0} = \circ_{v_1}$, that is, $S \circ_{v_0} S' = S \circ_{v_1} S'$ holds for any $S$, $S' \in \Complex \R(X)$. 
In particular, if $X$ is vertex-transitive, then $\circ_{v_0} = \circ_{v_1}$ holds for any $v_0$, $v_1 \in V$. 
\end{proposition}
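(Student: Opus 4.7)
The plan is to show that the coefficients $P_{i,j}^{k}$ defining the convolution, as given by \eqref{eqDefP}, depend only on the structure of $X$ as seen from the base point, and that any automorphism sending $v_{0}$ to $v_{1}$ transports this structure. Since $\circ_{v_{0}}$ is determined by its values on the basis $\R(X)$ by bilinearity, it suffices to prove $R_{i} \circ_{v_{0}} R_{j} = R_{i} \circ_{v_{1}} R_{j}$ for all $i, j \in I$; equivalently, $P_{i,j}^{k}(v_{0}) = P_{i,j}^{k}(v_{1})$ for all $i, j, k \in I$, where I append the base point to $P_{i,j}^{k}$ temporarily to emphasize its potential dependence.

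The first step is the observation that a graph automorphism preserves distances: any path $v \to v^{(1)} \to \cdots \to v^{(n)} = w$ is sent to a path $\varphi(v) \to \varphi(v^{(1)}) \to \cdots \to \varphi(w)$ of the same length, and applying the same argument to $\varphi^{-1}$ gives the reverse inequality, so $\dist{\varphi(v)}{\varphi(w)} = \dist{v}{w}$ for all $v, w \in V$. Consequently, for each vertex $u \in V$ and each $i \in I$, the restriction of $\varphi$ yields a bijection $\varphi \mid_{\Gamma_{i}(u)} : \Gamma_{i}(u) \to \Gamma_{i}(\varphi(u))$.

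The second step is to apply this bijection to the defining formula \eqref{eqDefP}. Setting $u = v_{0}$ gives $\varphi : \Gamma_{i}(v_{0}) \to \Gamma_{i}(v_{1})$, and for each $v \in \Gamma_{i}(v_{0})$, the automorphism $\varphi$ sends $\Gamma_{j}(v)$ bijectively onto $\Gamma_{j}(\varphi(v))$ and $\Gamma_{k}(v_{0})$ bijectively onto $\Gamma_{k}(v_{1})$, so $\abs{\Gamma_{j}(v) \cap \Gamma_{k}(v_{0})} = \abs{\Gamma_{j}(\varphi(v)) \cap \Gamma_{k}(v_{1})}$ and $\abs{\Gamma_{j}(v)} = \abs{\Gamma_{j}(\varphi(v))}$. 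Reindexing the sum in \eqref{eqDefP} by the substitution $v \mapsto \varphi(v)$ then yields $P_{i,j}^{k}(v_{0}) = P_{i,j}^{k}(v_{1})$, which gives the desired identity $R_{i} \circ_{v_{0}} R_{j} = R_{i} \circ_{v_{1}} R_{j}$ and, by bilinearity, $\circ_{v_{0}} = \circ_{v_{1}}$ on $\Complex \R(X)$.

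The final assertion is immediate: if $X$ is vertex-transitive, then for any $v_{0}, v_{1} \in V$ one has an automorphism carrying $v_{0}$ to $v_{1}$, and the first part applies. No step here is a real obstacle; the only item requiring care is to be explicit about the distance-preserving property of automorphisms, since it is used three times (for the outer index $i$, the inner index $j$, and the target index $k$), and to ensure that the substitution in the summation is justified by the bijectivity of $\varphi\mid_{\Gamma_{i}(v_{0})}$.
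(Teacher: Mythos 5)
Your proof is correct and follows essentially the same route as the paper's: reduce to the basis elements by bilinearity, note that an automorphism preserves distances and hence restricts to bijections $\Gamma_i(u) \to \Gamma_i(\varphi(u))$, and reindex the sum in \eqref{eqDefP} to conclude $P_{i,j}^{k}(v_0) = P_{i,j}^{k}(v_1)$. Your writeup is if anything slightly more explicit than the paper's about why automorphisms are distance-preserving.
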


\begin{proof}
Since the convolutions $\circ_{v_0}$ and $\circ_{v_1}$ are bilinear, 
it suffices to show that $R_i \circ_{v_0} R_j = R_i \circ_{v_1} R_j$ for every $i$, $j \in I$. 

Let $R_i \circ_{v_0} R_j = \sum_{k \in I} P_{i,j}^{k} R_k$ and $R_i \circ_{v_1} R_j = \sum_{k \in I} Q_{i,j}^{k} R_k$ for given $i$, $j \in I$. 
Since the automorphism $\varphi$ of $X$ preserves the distance on $X$, 
that is, $\dist{\varphi(v)}{\varphi(w)} = \dist{v}{w}$ holds for any $v$, $w \in V$, we have 
\begin{align*}
Q_{i,j}^{k} &= 
\frac{1}{\abs{\Gamma_{i} (v_1)}} \sum_{v \in \Gamma_{i} (v_1)} \frac{\abs{\Gamma_{j} (v) \cap \Gamma_{k} (v_1)}}{\abs{\Gamma_{j} (v)}} \\
&= \frac{1}{\abs{\Gamma_{i} (\varphi(v_0))}} 
     \sum_{w \in \Gamma_{i} (v_0)} \frac{\abs{\Gamma_{j} (\varphi(w)) \cap \Gamma_{k} (\varphi(v_0))}}{\abs{\Gamma_{j} (\varphi(w))}} \\ 
&= \frac{1}{\abs{\Gamma_{i} (v_0)}} \sum_{w \in \Gamma_{i} (v_0)} \frac{\abs{\Gamma_{j} (w) \cap \Gamma_{k} (v_0)}}{\abs{\Gamma_{j} (w)}} 
  = P_{i,j}^{k}. 
\end{align*}
\end{proof}

It immediately follows from Proposition \ref{PropVerTrans} 
that the hypergroup productivity is preserved by an automorphism in the sense of the following. 

\begin{corollary} \label{CorVerTrans}
Let $X = (V, E)$ be a graph satisfying the self-centered condition and $v_0$, $v_1 \in V$. 
Suppose that there exists an automorphism $\varphi$ of $X$ such that $\varphi(v_0) = v_1$. 
Then, $(X, v_0)$ is a hypergroup productive pair if and only if so is $(X, v_1)$. 
\end{corollary}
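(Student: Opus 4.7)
The plan is to derive this corollary as an essentially immediate consequence of Proposition \ref{PropVerTrans}. Recall that by the self-centered condition together with Proposition \ref{PropWellDefined}, the probabilities $P_{i,j}^{k}$ are well-defined for both base points $v_0$ and $v_1$, so that the convolutions $\circ_{v_0}$ and $\circ_{v_1}$ are both well-defined bilinear operations on $\Complex \R(X)$, and the conjugate-linear involution $*$ coincides in either case (being the identity on $\R(X)$).

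First, I would apply Proposition \ref{PropVerTrans} to conclude that $\circ_{v_0} = \circ_{v_1}$ as binary operations on $\Complex \R(X) \times \Complex \R(X)$. Next, I would recall from Proposition \ref{PropHg} that the conditions (iia), (iib), (iii), (iv) of Definition \ref{DefHG}, as well as the existence of the neutral element $R_0$, hold automatically whenever the $P_{i,j}^{k}$'s are well-defined, regardless of the choice of the base point. Consequently, the only content of ``being hypergroup productive'' that can possibly differ between $v_0$ and $v_1$ is the associativity of the convolution (commutativity then follows, as noted after Definition \ref{DefHG}, from the hermitian property together with associativity, or can be verified in parallel).

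Finally, since $\circ_{v_0}$ and $\circ_{v_1}$ are literally the same bilinear map on $\Complex \R(X) \times \Complex \R(X)$, the associativity identity
\begin{equation*}
(S \circ_{v_0} S') \circ_{v_0} S'' = S \circ_{v_0} (S' \circ_{v_0} S'')
\end{equation*}
holds for all $S, S', S'' \in \Complex \R(X)$ if and only if the analogous identity with $\circ_{v_1}$ in place of $\circ_{v_0}$ holds, and the same applies to commutativity. Hence $(\R(X), \circ_{v_0})$ is a hermitian discrete hypergroup if and only if $(\R(X), \circ_{v_1})$ is, which is precisely the claim.

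There is no real obstacle here; the work has already been absorbed into Proposition \ref{PropVerTrans}, and the corollary is a routine translation from ``same convolution'' to ``same hypergroup-productivity status.''
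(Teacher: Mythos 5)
Your proposal is correct and follows the same route as the paper, which derives the corollary immediately from Proposition \ref{PropVerTrans}: since $\circ_{v_0}$ and $\circ_{v_1}$ are literally the same convolution, every property (associativity, commutativity, and the remaining hypergroup axioms already guaranteed by Proposition \ref{PropHg}) transfers between the two base points. No gaps.
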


A Cayley graph is vertex-transitive, 
so it turns to a hypergroup productive graph if it admits a hypergroup productive pair. 
Moreover, in our context, at most one hypergroup structure can be introduced to the canonical partition of a Cayley graph. 

To show that a given pair $(X, v_0)$ is hypergroup productive, where $X$ is a graph satisfying the self-centered condition and $v_0 \in V$, 
we have to show the commutativity and the associativity of the convolution $\circ_{v_0}$, as we saw in Section \ref{SecDRGWil}. 
In particular, 
the associativity $(R_h \circ_{v_0} R_i) \circ_{v_0} R_j = R_h \circ_{v_0} (R_i \circ_{v_0} R_j)$ should be certificated for every $h$, $i$, $j \in I$, 
but it can be reduced to an easier case. 

\begin{proposition} \label{PropAssCom}
Let $X = (V, E)$ be a graph satisfying the self-centered condition and $v_0 \in V$. 
Assume that the convolution $\circ_{v_0}$ satisfies the following two identities for any $i$, $j \in I$: 
\begin{gather*}
R_i \circ_{v_0} R_j = R_j \circ_{v_0} R_i, \\
(R_1 \circ_{v_0} R_i) \circ_{v_0} R_j = R_1 \circ_{v_0} (R_i \circ_{v_0} R_j). 
\end{gather*}
Then, $\circ_{v_0}$ is commutative and associative on $\Complex \R(X)$, so that $(X, v_0)$ is a hypergroup productive pair. 
\end{proposition}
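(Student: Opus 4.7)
Since commutativity of $\circ_{v_0}$ is already part of the hypothesis, the task is to upgrade the special associativity (leftmost factor $R_1$) to the full associativity
$(R_h \circ_{v_0} R_i) \circ_{v_0} R_j = R_h \circ_{v_0} (R_i \circ_{v_0} R_j)$
for all $h, i, j \in I$. My plan is induction on $h$. The base case $h = 0$ is trivial because $R_0$ is the neutral element, and $h = 1$ is exactly the assumed identity.

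For the inductive step I want to express $R_h$ as a linear combination of $R_1 \circ_{v_0} R_{h-1}$, $R_{h-1}$, and $R_{h-2}$, and then propagate the assumed $R_1$-associativity. By the support bound \eqref{eqProbmeas3} applied to $R_1 \circ_{v_0} R_{h-1}$, we have
\begin{equation*}
R_1 \circ_{v_0} R_{h-1} = P_{1,h-1}^{h-2} R_{h-2} + P_{1,h-1}^{h-1} R_{h-1} + P_{1,h-1}^{h} R_h,
\end{equation*}
so I can solve for $R_h$ provided $P_{1,h-1}^{h} > 0$. This positivity, which I expect to be the most delicate step, is verified as follows: since $X$ satisfies the self-centered condition, $\Gamma_h(v_0) \ne \varnothing$ for every $h \in I$; picking any $w \in \Gamma_h(v_0)$ and a geodesic $v_0 \to u_1 \to \cdots \to u_h = w$, the vertex $u_1 \in \Gamma_1(v_0)$ satisfies $\dist{u_1}{w} = h - 1$ by the triangle inequality, so $w \in \Gamma_{h-1}(u_1) \cap \Gamma_h(v_0)$, giving a strictly positive contribution to the defining sum \eqref{eqDefP} of $P_{1,h-1}^{h}$.

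Once $R_h$ is solved for, I substitute into $(R_h \circ_{v_0} R_i) \circ_{v_0} R_j$ and use bilinearity. The contributions from $R_{h-1}$ and $R_{h-2}$ are dispatched directly by the inductive hypothesis. For the remaining contribution $((R_1 \circ_{v_0} R_{h-1}) \circ_{v_0} R_i) \circ_{v_0} R_j$, I plan to apply the assumed $R_1$-associativity three times, interleaved with the inductive hypothesis: first rewrite $(R_1 \circ_{v_0} R_{h-1}) \circ_{v_0} R_i$ as $R_1 \circ_{v_0} (R_{h-1} \circ_{v_0} R_i)$; then, extending the $R_1$-associativity bilinearly to the element $R_{h-1} \circ_{v_0} R_i \in \Complex\R(X)$ (which is a finite combination of $R_k$'s by \eqref{eqProbmeas3}), move the outer $R_j$ inside to obtain $R_1 \circ_{v_0} ((R_{h-1} \circ_{v_0} R_i) \circ_{v_0} R_j)$; apply the inductive hypothesis for $h-1$; and finally retract via $R_1$-associativity with middle term $R_{h-1}$ to get $(R_1 \circ_{v_0} R_{h-1}) \circ_{v_0} (R_i \circ_{v_0} R_j)$. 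Assembling the three pieces back with the same scalars yields $R_h \circ_{v_0} (R_i \circ_{v_0} R_j)$, completing the induction.

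The main obstacle is exactly the positivity $P_{1,h-1}^{h} > 0$, since the whole recursion collapses without it; this is precisely where the self-centered condition (built into Definition \ref{DefHGP}) is used. Everything else is bookkeeping with bilinearity and repeated use of the two hypotheses.
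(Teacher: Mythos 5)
Your proposal is correct and follows essentially the same route as the paper: induction on $h$, solving for $R_h$ from $R_1 \circ_{v_0} R_{h-1}$ via the support bound \eqref{eqProbmeas3}, and propagating the assumed $R_1$-associativity through bilinearity and the inductive hypothesis. Your explicit geodesic argument for the positivity $P_{1,h-1}^{h} > 0$ is a welcome elaboration of the paper's brief parenthetical appeal to connectivity, but it does not change the method.
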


\begin{proof}
In this proof, we simply write $\circ$ instead of $\circ_{v_0}$. 

Since the convolution $\circ$ is commutative on $\R(X) \times \R(X)$ and bilinear on $\Complex \R(X) \times \Complex \R(X)$, 
the commutativity extends to whole $\Complex \R(X) \times \Complex \R(X)$. 

We will check the associativity of $\circ$. 
If the given convolution $\circ$ is associative on $\R(X)$, 
that is, $(R_h \circ R_i) \circ R_j = R_h \circ (R_i \circ R_j)$ holds for any $h$, $i$, $j \in I$, 
then the associativity of $\circ$ extends to whole $\Complex \R(X)$ from the bilinearity of $\circ$. 
Hence what we have to show is that 
\begin{equation} \label{eqAssBasis}
(R_h \circ R_i) \circ R_j = R_h \circ (R_i \circ R_j)
\end{equation}
for any $h$, $i$, $j \in I$. 

When $h = 0$, \eqref{eqAssBasis} is immediately obtained from the fact that $R_0$ is the neutral element of $\Complex \R(X)$. 
Thus, by our assumption, we have the conclusion when $\diam{X} = 1$. 
We assume that $\diam{X} \geq 2$. 
To show \eqref{eqAssBasis} by induction on $h$, suppose that $h \in I$ with $h \geq 2$ and the claim is true in the cases up to $h-1$. 
By \eqref{eqProbmeas3}, we find that 
\begin{equation*}
R_h = \frac{1}{P_{1,h-1}^{h}} \left( R_1 \circ R_{h-1} - P_{1,h-1}^{h-2} R_{h-2} - P_{1,h-1}^{h-1} R_{h-1} \right). 
\end{equation*}
(Note that it follows that $P_{1,h-1}^{h} > 0$ from the connectivity of $X$.)
By using this identity, we get 
\begin{align*}
&(R_h \circ R_i) \circ R_j \\
=& \frac{1}{P_{1,h-1}^{h}} \left[ \left( (R_1 \circ R_{h-1}) \circ R_i \right) \circ R_j \right] \\
&- \frac{P_{1,h-1}^{h-2}}{P_{1,h-1}^{h}} \left[ (R_{h-2} \circ R_i) \circ R_j \right] 
- \frac{P_{1,h-1}^{h-1}}{P_{1,h-1}^{h}} \left[ (R_{h-1} \circ R_i) \circ R_j \right].
\end{align*}
The induction hypothesis gives that 
\begin{gather}
(R_{h-1} \circ R_i) \circ R_j = R_{h-1} \circ (R_i \circ R_j), \label{eqAss1} \\
(R_{h-2} \circ R_i) \circ R_j = R_{h-2} \circ (R_i \circ R_j), \label{eqAss2}
\end{gather}
and the induction basis gives that 
\begin{equation*}
(R_1 \circ R_{h-1}) \circ R_i = R_1 \circ (R_{h-1} \circ R_i).
\end{equation*}
Therefore, appealing to the induction basis, the induction hypothesis and the bilinearity of $\circ$, we have that 
\begin{align}
& \left( (R_1 \circ R_{h-1}) \circ R_i \right) \circ R_j \notag \\
=& \left( R_1 \circ (R_{h-1}\circ R_i) \right) \circ R_j \notag \\
=& \sum_{k=0}^{h+i-1} P_{h-1, i}^{k} \left( (R_1 \circ R_k) \circ R_j \right) \notag \\
=& \sum_{k=0}^{h+i-1} P_{h-1, i}^{k} \left( R_1 \circ (R_k \circ R_j) \right) \notag \\
=& R_1 \circ \left( (R_{h-1} \circ R_i) \circ R_j \right) \notag \\
=& R_1 \circ \left( R_{h-1} \circ (R_i \circ R_j) \right) \notag \\
=& \sum_{k=0}^{i+j} P_{i,j}^{k} \left( R_1 \circ (R_{h-1} \circ R_k) \right) \notag \\
=& \sum_{k=0}^{i+j} P_{i,j}^{k} \left( (R_1 \circ R_{h-1}) \circ R_k \right) \notag \\
=& (R_1 \circ R_{h-1}) \circ (R_i \circ R_j). \label{eqAss3}
\end{align}


These identities \eqref{eqAss1}, \eqref{eqAss2} and \eqref{eqAss3} make the end of the proof with 
\begin{align*}
&\frac{1}{P_{1,h-1}^{h}} \left[ \left( (R_1 \circ R_{h-1}) \circ R_i \right) \circ R_j \right] \\
&- \frac{P_{1,h-1}^{h-2}}{P_{1,h-1}^{h}} \left[ (R_{h-2} \circ R_i) \circ R_j \right] 
- \frac{P_{1,h-1}^{h-1}}{P_{1,h-1}^{h}} \left[ (R_{h-1} \circ R_i) \circ R_j \right] \\
=& \frac{1}{P_{1,h-1}^{h}} \left[ (R_1 \circ R_{h-1}) \circ (R_i \circ R_j) \right] \\
&- \frac{P_{1,h-1}^{h-2}}{P_{1,h-1}^{h}} \left[ R_{h-2} \circ (R_i \circ R_j) \right] 
- \frac{P_{1,h-1}^{h-1}}{P_{1,h-1}^{h}} \left[ R_{h-1} \circ (R_i \circ R_j) \right] \\
=& \left[ \frac{1}{P_{1,h-1}^{h}} \left( R_1 \circ R_{h-1} - P_{1,h-1}^{h-2} R_{h-2} - P_{1,h-1}^{h-1} R_{h-1} \right) \right] \circ (R_i \circ R_j) \\
=& R_h \circ (R_i \circ R_j). 
\end{align*}
\end{proof}


\subsection{Examples of hypergroup productive graphs} \label{SecHGPGraph}

We see some examples of non-distance-regular hypergroup productive graphs in this section. 
One can check the associativity by direct calculations for each case, 
so we shall only give the structure identities and omit the proof of associativity. 

\begin{itemize}
\item[$\mathrm{(i)}$] Prisms
\end{itemize}

The prism graphs are the simplest examples of non-distance-regular graphs, 
on which a random walk produces a hermitian discrete hypergroup. 
Consider the $n$-gonal prism for $n \geq 3$ and let $V_n$ denote the set of its vertices and $E_n$ the set of its edges. 
The graph $\mathcal{P}_n = (V_n, E_n)$ is distance-regular if and only if $n = 4$ ($\mathcal{P}_4 \cong \mathcal{S}_6$), 
but any $n \geq 3$ allows $\mathcal{P}_n$ to produce a hermitian discrete hypergroup. 

The $n$-gonal prism graph $\mathcal{P}_n$ can be realized as a Cayley graph; 
$\mathcal{P}_n = \Cayley{\Integer / n \Integer \oplus \Integer / 2 \Integer}{\set{(\overline{\pm 1}, \overline{0}), (\overline{0}, \overline{1})}}$, 
where $\overline{a}$ denotes the residue class of $a \in \Integer$. 
Hence, by Proposition \ref{PropVerTrans}, 
the hypergroup structure $\left( \R(\mathcal{P}_n), \circ_{v_0} \right)$ is independent of a choice of the base point $v_0$. 

The structure identities of $\R(\mathcal{P}_n)$ are computed to be, if $n = 2m+1$ with $m \geq 2$, 
\begin{multline} \notag 
R_1 \circ R_j = R_j \circ R_1 \\
= \frac{3-\delta_{j,1}+\delta_{j,m+1}}{6} R_{j-1} + \frac{\delta_{j,m}}{6} R_j + \frac{3+\delta_{j,1}-\delta_{j,m}-\delta_{j,m+1}}{6} R_{j+1} \\
(1 \leq j \leq m+1), 
\end{multline}
\begin{multline} \label{eqPrismOdd2}
R_i \circ R_j = R_j \circ R_i \\
= \frac{3-\delta_{i,j}}{8} R_{\abs{i-j}} + \frac{1+\delta_{i,j}}{8} R_{\abs{i-j}+2} + \frac{1}{8} R_{i+j-2} + \frac{3}{8} R_{i+j} \\
(2 \leq i, j \leq m-1,\, i+j \leq m), 
\end{multline}
\begin{multline} \label{eqPrismOdd3}
R_i \circ R_j = R_j \circ R_i \\
= \frac{3-\delta_{i,j}}{8} R_{\abs{i-j}} + \frac{1+\delta_{i,j}}{8} R_{\abs{i-j}+2} + \frac{1}{8} R_{m-1} + \frac{1}{8} R_m + \frac{1}{4} R_{m+1} \\
(2 \leq i, j \leq m-1,\, m+1 \leq i+j \leq m+2), 
\end{multline}
\begin{multline} \label{eqPrismOdd4}
R_i \circ R_j = R_j \circ R_i \\
= \frac{3-\delta_{i,j}}{8} R_{\abs{i-j}} + \frac{1+\delta_{i,j}}{8} R_{\abs{i-j}+2} + \frac{1}{8} R_{2m-i-j+1} + \frac{3}{8} R_{2m-i-j+3} \\
(2 \leq i, j \leq m-1,\, i+j \geq m+3), 
\end{multline}
\begin{multline} \label{eqPrismOdd5}
R_i \circ R_m = R_m \circ R_i \\
= \frac{3}{8} R_{m-i} + \frac{1}{8} R_{m-i+1} + \frac{1+\delta_{i,2}}{8} R_{m-i+2} + \frac{3-\delta_{i,2}}{8} R_{m-i+3} \\
(2 \leq i \leq m-1), 
\end{multline}
\begin{equation} \notag 
R_m \circ R_m = \frac{1}{4} R_0 + \frac{1}{8} R_1 + \frac{2 + \delta_{m,2}}{8} R_2 + \frac{3 - \delta_{m,2}}{8} R_3, 
\end{equation}
\begin{multline} \notag 
R_i \circ R_{m+1} = R_{m+1} \circ R_i = \frac{3+\delta_{i,1}}{6} R_{m-i+1} + \frac{3-\delta_{i,1}}{6} R_{m-i+2} \\
(1 \leq i \leq m+1), 
\end{multline}
and to be, if $n = 2m$ with $m \geq 2$, 
\begin{multline} \notag 
R_1 \circ R_j = R_j \circ R_1 = \\
\frac{3 - \delta_{j,1} + \delta_{j,m} + 3 \delta_{j,m+1}}{6} R_{j-1} + \frac{3 + \delta_{j,1} - \delta_{j,m} - 3 \delta_{j,m+1}}{6} R_{j+1} \\
(1 \leq j \leq m+1), 
\end{multline}
\begin{multline} \label{eqPrismEven2}
R_i \circ R_j = R_j \circ R_i = \\
\frac{3 - \delta_{i,j}}{8} R_{\abs{i-j}} + \frac{1 + \delta_{i,j}}{8} R_{\abs{i-j}+2} + 
 \frac{1 + \delta_{i+j,m+1}}{8} R_{i+j-2} + \frac{3 - \delta_{i+j,m+1}}{8} R_{i+j} \\
(2 \leq i, j \leq m-1,\, i+j \leq m+1), 
\end{multline}
\begin{multline} \label{eqPrismEven3}
R_i \circ R_j = R_j \circ R_i = \\
\frac{3 - \delta_{i,j}}{8} R_{\abs{i-j}} + \frac{1 + \delta_{i,j}}{8} R_{\abs{i-j}+2} + 
 \frac{1}{8} R_{2m-i-j} + \frac{3}{8} R_{2m-i-j+2} \\
(2 \leq i, j \leq m-1,\, i+j \geq m+2), 
\end{multline}
\begin{equation} \label{eqPrismEven4}
R_i \circ R_m = R_m \circ R_i = 
\frac{1}{2} R_{m-i} + \frac{1}{2} R_{m-i+2} \quad 
(2 \leq i \leq m-1), 
\end{equation}
\begin{equation} \notag 
R_m \circ R_m = \frac{1}{3} R_0 + \frac{2}{3} R_2, 
\end{equation}
\begin{equation} \notag 
R_i \circ R_{m+1} = R_{m+1} \circ R_i = 
R_{m-i+1} \quad 
(1 \leq i \leq m+1). 
\end{equation}
The identities for $R_i \circ R_j$ with $2 \leq i \leq m-1$ or with $2 \leq j \leq m-1$ in the above 
(i.e.\ \eqref{eqPrismOdd2} -- \eqref{eqPrismOdd5} and \eqref{eqPrismEven2} -- \eqref{eqPrismEven4}) should be omitted when $m = 2$. 
If $n = 3$, the structure identities of $\R(\mathcal{P}_3)$, given as follows, are slightly different from the above ones; 
\begin{gather*}
R_1 \circ R_1 = \frac{1}{3} R_0 + \frac{2}{9} R_1 + \frac{4}{9} R_2, \\
R_1 \circ R_2 = R_2 \circ R_1 = \frac{2}{3} R_1 + \frac{1}{3} R_2, \\
R_2 \circ R_2 = \frac{1}{2} R_0 + \frac{1}{2} R_1. 
\end{gather*}

We note that there can be constructed a finite hermitian discrete hypergroup of arbitrarily large order from a prism graph. 

\begin{itemize}
\item[$\mathrm{(ii)}$] Finite regular graphs producing two types of hermitian discrete hypergroups
\end{itemize}

Thus far every example of hypergroup productive graphs have induced a single structure of hermitian discrete hypergroups 
in its canonical partition, 
whereas there exist hypergroup productive graphs which produce two (or more) structures of hermitian discrete hypergroups. 
Here, we see two such graphs, which are drawn as in Figures \ref{FigHeptagon} and \ref{FigLine3Prism}. 
The latter one can be realized as the line graph of the triangular prism $\mathcal{P}_3$. 
(The definition of the line graph refers to \cite{Big} etc.)
Both of two graphs are of diameter two, and neither one is found to be vertex-transitive from Proposition \ref{PropVerTrans}. 

Let the graph drawn in Figure \ref{FigHeptagon} be denoted by $X_1$ and the other one $X_2$. 
The structure identities of $\R(X_1)$ are given by 
\begin{gather*}
R_1 \circ_{v_0} R_1 = \frac{1}{4} R_0 + \frac{1}{4} R_1 + \frac{1}{2} R_2, \\
R_1 \circ_{v_0} R_2 = R_2 \circ_{v_0} R_1 = R_1, \\
R_2 \circ_{v_0} R_2 = \frac{1}{2} R_0 + \frac{1}{2} R_2 
\end{gather*}
if the base point $v_0$ is chosen from filled vertices in Figure \ref{FigHeptagon} and 
\begin{gather*}
R_1 \circ_{v_0} R_1 = \frac{1}{4} R_0 + \frac{3}{8} R_1 + \frac{3}{8} R_2, \\
R_1 \circ_{v_0} R_2 = R_2 \circ_{v_0} R_1 = \frac{3}{4} R_1 + \frac{1}{4} R_2, \\
R_2 \circ_{v_0} R_2 = \frac{1}{2} R_0 + \frac{1}{2} R_1 
\end{gather*}
if the base point $v_0$ is chosen from blank vertices in the same figure. 
On the other hand, the structure identities of $\R(X_2)$ are computed to be 
\begin{gather*}
R_1 \circ_{v_0} R_1 = \frac{1}{4} R_0 + \frac{3}{8} R_1 + \frac{3}{8} R_2, \\
R_1 \circ_{v_0} R_2 = R_2 \circ_{v_0} R_1 = \frac{3}{8} R_1 + \frac{5}{8} R_2, \\
R_2 \circ_{v_0} R_2 = \frac{1}{4} R_0 + \frac{5}{8} R_1 + \frac{1}{8} R_2
\end{gather*}
if the base point $v_0$ chosen from filled vertices in Figure \ref{FigLine3Prism} and 
\begin{gather*}
R_1 \circ_{v_0} R_1 = \frac{1}{4} R_0 + \frac{1}{4} R_1 + \frac{1}{2} R_2, \\
R_1 \circ_{v_0} R_2 = R_2 \circ_{v_0} R_1 = \frac{1}{2} R_1 + \frac{1}{2} R_2, \\
R_2 \circ_{v_0} R_2 = \frac{1}{4} R_0 + \frac{1}{2} R_1 + \frac{1}{4} R_2
\end{gather*}
if the base point $v_0$ chosen from blank vertices in the same figure.  

\begin{figure}[t]
\begin{minipage}{0.5\hsize}
\centering
\includegraphics{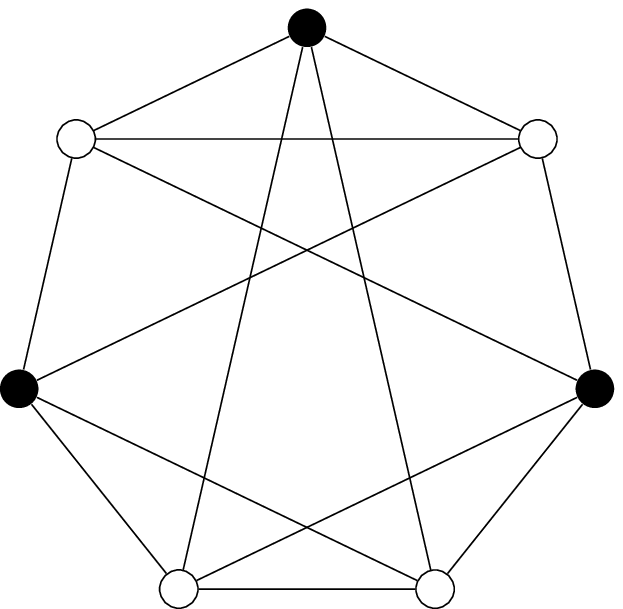}
\caption{A $4$-regular graph which produces two hypergroup structures}
\label{FigHeptagon}
\end{minipage}
\begin{minipage}{0.5\hsize}
\centering
\includegraphics{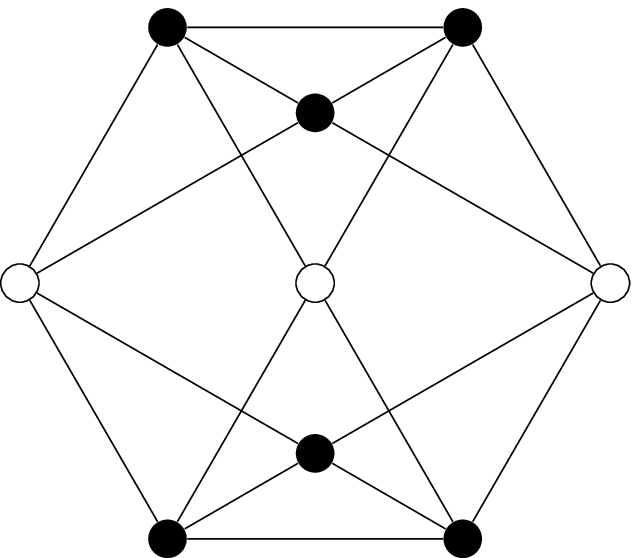}
\caption{The line graph of $\mathcal{P}_3$}
\label{FigLine3Prism}
\end{minipage}
\end{figure}

\begin{itemize}
\item[$\mathrm{(iii)}$] Complete bipartite graphs
\end{itemize}

Let $m$, $n \in \Natural$ with $m$, $n \geq 2$. 
The \textit{complete bipartite graph} $K_{m,n} = (V_{m,n}, E_{m,n})$ is defined as follows: 
\begin{itemize}
\item $V_{m,n} = \set{u_1, u_2, \cdots, u_m, w_1, w_2, \cdots, w_n}$ ($\abs{V_{m,n}} = m + n$). 
\item $E_{m,n} = \setcond{\set{u_{\alpha}, v_{\beta}}}{1 \leq \alpha \leq m, 1 \leq \beta \leq n}$. 
\end{itemize}

The complete bipartite graph $K_{m,n}$ is of diameter two and is regular if and only if $m = n$. 
We here meet an intriguing example, which is a non-regular hypergroup productive graph. 

We can compute the structure identities of $\R(K_{m,n})$ to be 
\begin{gather*}
R_1 \circ_{v_0} R_1 = \frac{1}{m} R_0 + \frac{m-1}{m} R_2, \\
R_1 \circ_{v_0} R_2 = R_2 \circ_{v_0} R_1 = R_1, \\
R_2 \circ_{v_0} R_2 = \frac{1}{m-1} R_0 + \frac{m-2}{m-1} R_2
\end{gather*}
if the base point $v_0$ is chosen from $u_{\alpha}$'s and to be 
\begin{gather*}
R_1 \circ_{v_0} R_1 = \frac{1}{n} R_0 + \frac{n-1}{n} R_2, \\
R_1 \circ_{v_0} R_2 = R_2 \circ_{v_0} R_1 = R_1, \\
R_2 \circ_{v_0} R_2 = \frac{1}{n-1} R_0 + \frac{n-2}{n-1} R_2
\end{gather*}
if the base point $v_0$ is chosen from $v_{\beta}$'s. 
Needless to say, these identities completely coincide when $m = n$. 

\begin{itemize}
\item[$\mathrm{(iv)}$] Infinite ladder graph
\end{itemize}

Here will be introduced an example of infinite hypergroup productive graphs, which can be drawn like a ladder as in Figure \ref{FigLadder}. 
More precisely, we consider the Cayley graph 
$\L = \Cayley{\Integer \oplus \left( \Integer / 2 \Integer \right)}{\set{(\pm 1, \overline{0}), (0, \overline{1})}}$ in this part. 
(As mentioned in Theorem \ref{ThmInfty}, 
$\overline{0}$ and $\overline{1}$ denote the residue classes of $0$ and $1$ modulo $2$, respectively.) 
Since Cayley graphs are vertex-transitive, 
Proposition \ref{PropVerTrans} allows one to assume that $v_0 = (0, \overline{0})$ is the base point. 
The structure identities of $\R(\L)$ can be computed to be 
\begin{gather*}
R_1 \circ R_1 = \frac{1}{3} R_0 + \frac{2}{3} R_2, \\
R_1 \circ R_i = R_i \circ R_1 = \frac{1}{2} R_{i-1} + \frac{1}{2} R_{i+1} \quad (i \geq 2), \\
R_i \circ R_i = \frac{1}{4} R_0 + \frac{1}{4} R_2 + \frac{1}{8} R_{2i-2} + \frac{3}{8} R_{2i} \quad (i \geq 2), \\
R_i \circ R_j = \frac{3}{8} R_{\abs{i-j}} + \frac{1}{8} R_{\abs{i-j}+2} + \frac{1}{8} R_{i+j-2} + \frac{3}{8} R_{i+j} \quad (i, j \geq 2,\, i \neq j). 
\end{gather*}
To prove the claim $\mathrm{(b)}$ of Theorem \ref{ThmInfty}, the associativity remains to be checked. 
It can be shown by elementary calculations (and Proposition \ref{PropAssCom}). 

\begin{figure}[t]
\centering
\includegraphics{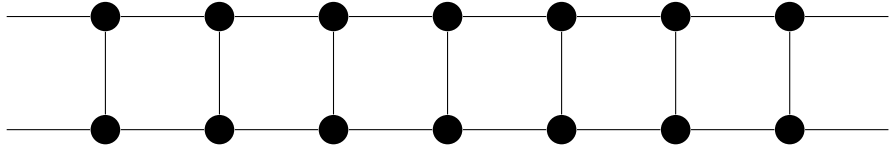}
\caption{Infinite ladder graph}
\label{FigLadder}
\end{figure}

\begin{remark}
One can find that the Cayley graph 
$\Cayley{\Integer \oplus \left( \Integer / n \Integer \right)}{\set{(\pm 1, \overline{0}), (0, \overline{\pm 1})}}$ admits 
no hypergroup productive pairs when $n \geq 3$. 
In addition, it turns out that the Cayley graph $\Cayley{\Integer \oplus \Integer}{\set{(\pm 1, 0), (0, \pm 1)}}$, 
which is actually expected to be hypergroup productive, 
also admits no hypergroup productive pairs. 
(This graph can be drawn as the square lattice in the Euclidean plane.)
There is no known examples of graphs that admit a hypergroup productive pair but fail to be hypergroup productive graphs. 
\end{remark}
 
\begin{remark}
Hypergroups derived from a random walk on an infinite graph can be realized as \textit{polynomial hypergroups} 
since $\supp{R_1 \circ R_i} \subset \set{i-1, i, i+1}$. 

In particular, the hypergroup derived from a random walk on $\T_2$ coincides 
with the hypergroup which can be constructed by the Chebyshev polynomials of the first kind, 
so that the hypergroup $\R(\T_2)$ is called the Chebyshev hypergroup of the first kind. 
Tsurii \cite{Tsu15} elucidated several properties of the Chebyshev hypergroup of the first kind. 
(He also investigate the Chebyshev hypergroup of the second kind in \cite{Tsu15}.)

For details of the polynomial hypergroups, see \cite[Chapter 3]{Blo-Hey} or \cite{Las83} for example. 
\end{remark}

\begin{acknowledgment}
The authors express their gratitude to Prof.\ Satoshi Kawakami, Prof.\ Kohji Matsumoto, Prof.\ Tatsuya Tsurii, Prof.\ Shigeru Yamagami 
and Mr.\ Ippei Mimura for helpful comments. 
\end{acknowledgment}






\end{document}